\theoremstyle{plain}
\newtheorem{theorem}    {Theorem}[section] 
\newtheorem*{theorem*}    {Theorem}
\newtheorem{lemma}      [theorem]{Lemma}
\newtheorem{corollary}  [theorem]{Corollary}
\newtheorem{proposition}[theorem]{Proposition}
\theoremstyle{definition}
\theoremstyle{remark}
\newtheorem{remark}              {Remark}
\newtheorem*{notation*}            {Notation}
\newtheorem*{question*}   {Question}
\numberwithin{equation}{section}
\def\A{\mathbb A}
\def\C{\mathbb C}
\def\F{\mathbb F}
\def\R{\mathbb R}
\begin{document}

\title[Decomposition of symmetric powers for GL(3) and GL(4)]{On the conjectural decomposition of symmetric powers of automorphic representations for GL(3) and GL(4)}

\author{Nahid Walji}
\address{Department of Mathematics, University of British Columbia, Vancouver, B.C., V6T 1Z2, Canada}
\email{nwalji@math.ubc.ca}

\date{}
\begin{abstract} Given a cuspidal automorphic representation $\pi$ for GL(3) over a number field and a positive integer $k$, assume that the symmetric $m$th power lifts of $\pi$ are isobaric automorphic for $m \leq k$, cuspidal for $m \leq k-1$, and that certain associated Rankin--Selberg products are isobaric automorphic. Then the number of cuspidal isobaric summands in the $k$th symmetric power lift is bounded above by 3 when $k \geq 7$, and bounded above by 2 when $k \geq 19$ with $k \equiv 1 \pmod 3$. We then investigate the analogous problem for GL(4).
\end{abstract}
\maketitle

\section{Introduction}

Let $\pi$ be a cuspidal automorphic representation for GL($n$) over a number field $F$. Consider the incomplete automorphic $L$-function
$$L^X(s,\pi) = \prod_{v \not \in X}{\rm det}\left(I_n - A_v (\pi) {\rm N}v^{-s} \right)^{-1},$$
for ${\rm Re}(s)>1$, where $X$ is a finite set containing the archimedean places as well as the places at which $\pi$ is ramified, and $A_v(\pi)$ is the (Langlands) conjugacy class of $\pi$ at $v$, which can be represented by a diagonal matrix in ${\rm GL}_n(\C)$. Given the symmetric power map ${\rm Sym}^k: {\rm GL}_n(\C) \rightarrow {\rm GL}_{m}(\C)$, (where $m$ is dependent on $n$ and $k$) one can construct the $L$-function
$$L^{X}(s, \pi, {\rm Sym}^k) = \prod_{v \not \in X}{\rm det}\left(I_m - {\rm Sym}^k(A_v (\pi)) {\rm N}v^{-s} \right)^{-1}.$$
for $s$ in some right-half plane. The functoriality conjectures then predict the existence of an isobaric automorphic representation $\Pi$ for ${\rm GL}(m)/F$ such that 
\begin{align*}
L(s,\Pi) = L(s, \pi, {\rm Sym}^k).
\end{align*}
In that case we denote $\Pi$ by ${\rm Sym}^{k}(\pi)$. This automorphic representation is not necessarily cuspidal -- for example, in the case of $n = 2$ and $k = 2$, the symmetric square (which is known to be automorphic due to the work of Gelbart--Jacquet \cite{GJ78}) can either be cuspidal (which occurs precisely when $\pi$ is not an induced representation), have exactly two cuspidal isobaric summands, or three cuspidal isobaric summands. 

Let $\mathcal{N} (\Pi)$ denote the number of cuspidal isobaric summands of $\Pi$. 
Assuming that the $k$th symmetric power lift of $\pi$ is an isobaric automorphic representation, we then ask
\begin{question*}
What bound $B$ can we establish such that 
\begin{align*}
\mathcal{N}({\rm Sym}^{k}\pi) \leq B,
\end{align*}
for all cuspidal automorphic representations $\pi$ for ${\rm GL}(n)/F$?
\end{question*}

The trivial bound in this setting would be $B = m$. 
This bound is in fact sharp for certain cases, for example if $(n,k) = (2,2)$, then $B = 3$ is optimal. This is because when $\pi$ is induced from a Hecke character $\nu$ with respect to a quadratic extension $K/F$, if $\nu/ \nu ^ \tau$ is invariant under the non-trivial element $\tau$ of ${\rm Gal}(K / F)$ 
then the symmetric square lift is the isobaric sum of three Hecke characters.
Similarly, if $(n,k)= (2,4)$, then the trivial bound $B = 5$ is also sharp
(this occurs for induced automorphic representations with the properties described in the previous sentence).
In contrast, when $(n,k) = (2,3)$, the bound $B = 3$ (rather than the trivial bound of $B = 4$) is sharp. For example, when $\pi$ is induced from a Hecke character $\nu$ of order three, the symmetric cube is an isobaric automorphic representation whose cuspidal isobaric summands consist of a cuspidal automorphic representation for GL(2) and two Hecke characters (see \cite{KS02}).

In the case of $n = 2$ and $k \geq 5$, the automorphy of the $k$th symmetric power lift is not known in general. There are however some conditional results: 
Under the assumption that the symmetric fifth power lift is automorphic and that the symmetric square, cube, and fourth power lifts are cuspidal, Ramakrishnan \cite{ra09} proved that the symmetric fifth power lift is also cuspidal. Furthermore, fixing some $k \geq 6$, assuming automorphy for symmetric $m$th power lifts for $m \leq 2k$ and assuming that ${\rm Sym}^6 (\pi)$ is cuspidal, he proves that ${\rm Sym}^{k} (\pi)$ is cuspidal. 

The case of $n > 2$ seems to be less well understood. 
Given the ideas of \cite{ra09}, we consider what would occur in the GL(3) and GL(4) setting. 
The purpose of this paper is to determine what the Langlands functoriality conjectures imply about upper bounds on the number of cuspidal isobaric summands in symmetric power lifts.

\begin{theorem}\label{mt}
Let $\pi$ be a cuspidal automorphic representation for GL(3) over a number field $F$. Fix some integer $k \geq 2$. If the adjoint and  $m$th symmetric power lifts of $\pi$ for ${\rm max}(1,k-3) \leq m \leq  k$ are isobaric automorphic and furthermore cuspidal when ${\rm max}(1,k-3) \leq m \leq k-1$, and the Rankin--Selberg product of $\pi$ or $\widetilde{\pi}$ with any summand of ${\rm Sym}^k (\pi)$ is isobaric automorphic, then 
\begin{align*}
\mathcal{N}({\rm Sym}^{k}(\pi)) \leq
\begin{cases}
4, \text{ for }k \geq 2, \\
3, \text{ for }k \geq 7 \text{ or }k=3, 4, \\
2, \text{ for }k \geq 19 \text{ and } k \equiv 1 \bmod 3.
\end{cases}
\end{align*}
\end{theorem}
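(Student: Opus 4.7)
The plan is to extract dimension bounds on the cuspidal summands of $\mathrm{Sym}^k(\pi)$ from the GL(3) Pieri-type decomposition
\[ V \otimes \mathrm{Sym}^{k-1}(V) = \mathrm{Sym}^k(V) \oplus S^{(k-1,1,0)}(V), \]
which translates at the level of $L$-functions into the identity
\[ L(s,\,\pi \times \mathrm{Sym}^{k-1}(\pi) \times \widetilde{\eta}) = L(s,\,\mathrm{Sym}^k(\pi) \times \widetilde{\eta}) \cdot L(s,\,S^{(k-1,1)}(\pi) \times \widetilde{\eta}) \]
for any cuspidal summand $\eta$ of $\mathrm{Sym}^k(\pi)$. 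The first factor on the right has a pole at $s=1$, hence so does $L(s,\,\mathrm{Sym}^{k-1}(\pi) \times (\pi \boxtimes \widetilde{\eta}))$. Since by hypothesis $\pi \boxtimes \widetilde{\eta}$ is isobaric automorphic and $\mathrm{Sym}^{k-1}(\pi)$ is cuspidal, this forces $\mathrm{Sym}^{k-1}(\pi)$ to appear as a cuspidal summand of $\pi \boxtimes \widetilde{\eta}$, yielding the fundamental estimate
\[ \dim(\eta) \geq \tfrac{1}{3}\dim \mathrm{Sym}^{k-1}(\pi) = \tfrac{k(k+1)}{6}. \]

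Summing over the $N$ cuspidal summands and using $\sum_i \dim(\eta_i) = \binom{k+2}{2}$ produces
\[ N \leq \frac{3(k+2)}{k}, \]
which immediately gives $N \leq 3$ for $k \geq 7$ and $N \leq 4$ for $k \geq 4$. The residual small-$k$ cases are $N \leq 4$ for $k=2$ and $N \leq 3$ for $k=3,4$; these are handled by exploiting the low-rank structure of $S^{(k-1,1)}(\pi)$ via identifications like $S^{(2,1,0)}(V) \cong \mathrm{Ad}(V) \otimes \det(V)$. For example, the $k=3$ decomposition reads $\pi \boxtimes \mathrm{Sym}^2(\pi) = \mathrm{Sym}^3(\pi) \boxplus (\mathrm{Ad}(\pi) \otimes \omega_\pi)$, and the assumption that $\mathrm{Ad}(\pi)$ is isobaric automorphic, combined with a pole-order analysis of $L(s,\,\mathrm{Sym}^k(\pi) \times \widetilde{\mathrm{Sym}^k(\pi)})$ read off from the GL(3) decomposition of $\mathrm{Sym}^k V \otimes \mathrm{Sym}^k V^{*}$, tightens the count enough to yield the claimed bounds.

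For the sharpest bound, $N \leq 2$ when $k \geq 19$ and $k \equiv 1 \pmod 3$, the basic dimension estimate only gives $N \leq 3$, so one proceeds by contradiction. Suppose $N = 3$ with summands of dimensions $d_i \in [k(k+1)/6,\,(k+1)(k+6)/6]$ summing to $(k+1)(k+2)/2$. Applying the same pole-of-$L$-function technique to the deeper products $\pi^{\boxtimes 2} \boxtimes \mathrm{Sym}^{k-2}(\pi)$ and $\pi^{\boxtimes 3} \boxtimes \mathrm{Sym}^{k-3}(\pi)$ --- precisely the decompositions accessible because the hypothesis extends cuspidality down to $\mathrm{Sym}^{k-3}(\pi)$ --- together with the Littlewood--Richardson expansions of these multiple tensor products, yields additional constraints relating the $d_i$ to $\dim \mathrm{Sym}^{k-2}(\pi)$ and $\dim \mathrm{Sym}^{k-3}(\pi)$. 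The congruence $k \equiv 1 \pmod 3$ enters through the central-character identity $\omega_{\eta_1}\omega_{\eta_2}\omega_{\eta_3} = \omega_\pi^{k(k+1)(k+2)/6}$ together with a divisibility obstruction on the $d_i$, which becomes incompatible with the Rankin--Selberg pole constraints precisely when $3 \mid (k+2)$ and $k$ is large enough.

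The main obstacle is this last step. The dimension lower bound alone never rules out $N=3$ for large $k$, so one must combine the bounds $d_i \geq k(k+1)/6$ with the finer constraints extracted from the triple- and quadruple-tensor decompositions and exhibit an arithmetic incompatibility specific to $k \equiv 1 \pmod 3$. Identifying the precise Diophantine obstruction --- and verifying it kicks in at $k=19$ but not at the earlier values $k \in \{7,10,13,16\}$ --- requires careful bookkeeping of the Littlewood--Richardson multiplicities appearing in the tower $\pi^{\boxtimes j} \boxtimes \mathrm{Sym}^{k-j}(\pi)$ for $j=1,2,3$, together with the interaction between central-character twists and the required cuspidality of the $S^{(k-j,j)}(\pi)$ summands.
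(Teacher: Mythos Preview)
Your basic identity is essentially the paper's, but as you have written it there is a genuine gap. From
\[
L(s,\mathrm{Sym}^{k-1}(\pi)\times\pi\boxtimes\widetilde{\eta})
= L(s,\mathrm{Sym}^{k}(\pi)\times\widetilde{\eta})\cdot
L(s,S^{(k-1,1)}(\pi)\times\widetilde{\eta}),
\]
a pole of the first right-hand factor does \emph{not} imply a pole of the left side unless you know the second factor is nonvanishing at $s=1$. Since $S^{(k-1,1)}(\pi)$ is not assumed automorphic, you cannot invoke Rankin--Selberg nonvanishing directly. The paper avoids this by rewriting the Pieri relation as the four-term identity
\[
L^X(s,\mathrm{Sym}^{k-1}\times\mathrm{std})\,L^X(s,\mathrm{Sym}^{k-3}\times\Lambda^3)
= L^X(s,\mathrm{Sym}^{k})\,L^X(s,\mathrm{Sym}^{k-2}\times\Lambda^2),
\]
in which every factor is a genuine Rankin--Selberg $L$-function under the stated hypotheses. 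This produces a dichotomy you have suppressed: either $\eta\simeq\mathrm{Sym}^{k-3}(\pi)\otimes\omega$ (forcing $\dim\eta=(k-2)(k-1)/2$), or $\mathrm{Sym}^{k-1}(\pi)\prec\widetilde{\pi}\boxtimes\eta$ (forcing $\dim\eta\geq k(k+1)/6$). For $k\geq 5$ both branches happen to give $\dim\eta\geq k(k+1)/6$, so your bound $N\leq 3(k+2)/k$ survives; but for $k=4$ the first branch gives $\dim\eta=3$, and the paper needs an extra argument to exclude it. Your treatment of $k=3,4$ is too sketchy to count as a proof.

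More seriously, your proposed mechanism for the bound $N\leq 2$ when $k\equiv 1\pmod 3$ is not what happens, and the approach you outline (central-character identities among the $\eta_i$, divisibility obstructions on the $d_i$, deeper tensor products $\pi^{\boxtimes j}\boxtimes\mathrm{Sym}^{k-j}(\pi)$) does not lead anywhere. The paper's argument is much simpler: once $\mathrm{Sym}^{k-1}(\pi)\prec\widetilde{\pi}\boxtimes\tau$, write $\widetilde{\pi}\boxtimes\tau=\mathrm{Sym}^{k-1}(\pi)\boxplus\eta'$ with $\eta'$ of dimension $3r-k(k+1)/2$, and observe (via a short Rankin--Selberg lemma) that the cuspidal $\tau$ must then also occur in $\pi\boxtimes\eta'$, giving $r\leq 3\dim\eta'$, hence $r\geq 3k(k+1)/16$. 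The congruence $k\equiv 1\pmod 3$ enters only because it is exactly the condition under which $3\nmid k(k+1)/2$, so that the inequality $3r\geq k(k+1)/2$ is automatically strict and $\eta'$ is guaranteed nontrivial; no Diophantine analysis or Littlewood--Richardson bookkeeping is involved.
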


\begin{remark}
For $k = 2$, this bound is sharp; see Proposition \ref{propk2}.
\end{remark}
One can also ask what can be said about other functorial lifts under suitable automorphy assumptions. In the case of the adjoint lift of $\pi$, which is expected to be an automorphic representation for GL(8), under the assumption that both the adjoint and symmetric square lifts are automorphic, we show that the cuspidality of the adjoint lift is equivalent to the cuspidality of the symmetric square lift (see Corollary \ref{ads2}).
As for the case of exterior powers, it is known that if $\pi$ is cuspidal then so are $\Lambda^2 (\pi)$ and $\Lambda^3 (\pi)$, since $\Lambda^2 (\pi) = \widetilde{\pi}\otimes \omega$ and $\Lambda^3 (\pi) = \omega$, where $\omega$ denotes the central character of $\pi$. 

It is of interest to form a comparison by considering the case of GL(4). As one might expect, the bounds obtained are not as strong as in the GL(3) case. In particular, we have 
\begin{theorem}\label{mt2}
Let $\pi$ be a cuspidal automorphic representation for GL(4)$/F$. Fix some $k \geq 15$. Assume that the $m$th symmetric power lift of $\pi$ is isobaric automorphic for $k-4 \leq m \leq  k$, cuspidal for $m = k-3, k-1$, and that the Rankin--Selberg product of $\pi $ or $ \widetilde{\pi}$ with any summand of ${\rm Sym}^k (\pi)$ is isobaric automorphic, then 
\begin{align*}
\mathcal{N}({\rm Sym}^{k}(\pi)) \leq
\begin{cases}
6, \text{ for }k \geq 15,\\ 
5, \text{ for }k \geq 21,\\ 
4, \text{ for } k \geq 39, \\
3, \text{ for }k \geq 139 \text{ and } k \equiv 3,5,7 \bmod 8.
\end{cases}
\end{align*}
\end{theorem}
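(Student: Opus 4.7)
The plan is to adapt the template developed for Theorem \ref{mt} to the GL(4) setting. Write
\[
{\rm Sym}^{k}(\pi) = \eta_1 \boxplus \cdots \boxplus \eta_r
\]
with each $\eta_i$ cuspidal of degree $d_i$, so $\sum_i d_i = D_k := \binom{k+3}{3}$. The first step is to exploit the GL(4) representation-theoretic identity
\[
\widetilde{V} \otimes {\rm Sym}^{j}V \;\cong\; {\rm Sym}^{j-1}V \;\oplus\; \bigl(S^{(j+1,1,1,0)}V \otimes (\det V)^{-1}\bigr),
\]
which lifts to a formal equality of Satake parameters
\[
\widetilde{\pi} \boxtimes {\rm Sym}^{j}(\pi) \;=\; {\rm Sym}^{j-1}(\pi) \;\boxplus\; \tau_j,
\]
where $\tau_j := S^{(j+1,1,1,0)}(\pi) \otimes \omega^{-1}$. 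Taking $j=k$, the left-hand side equals $\boxplus_i \widetilde{\pi} \boxtimes \eta_i$, which is isobaric automorphic by hypothesis; strong multiplicity one then forces the cuspidal ${\rm Sym}^{k-1}(\pi)$ to occur as a summand of $\widetilde{\pi} \boxtimes \eta_i$ for some $i$, yielding an initial dimension bound $d_i \geq D_{k-1}/4$.

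Since this bound constrains only one of the $\eta_i$'s, the next step is to invoke the cuspidality of ${\rm Sym}^{k-3}(\pi)$. Iterating the identity three times produces ${\rm Sym}^{k-3}(\pi)$ as a summand in the formal decomposition of $\widetilde{\pi}^{\boxtimes 3} \boxtimes {\rm Sym}^{k}(\pi)$, together with intermediate terms controlled by $\tau_{k}, \tau_{k-1}, \tau_{k-2}$. Because $\widetilde{\pi}^{\boxtimes s} \boxtimes \eta_i$ for $s \geq 2$ is not directly assumed isobaric automorphic, this analysis must be carried out at the level of triple (and higher) Rankin--Selberg $L$-functions, factored through the accessible products $\widetilde{\pi} \boxtimes \eta_i$ and $\pi \boxtimes \eta_i$. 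The pole analysis at $s=1$ distributes among the $\eta_i$'s to yield further lower bounds of the form $d_j \geq c_s D_{k-s}$ for various $j$ and small $s$.

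Combining these bounds against $\sum_i d_i = D_k$ and using $D_k/D_{k-s} = \prod_{j=0}^{s-1}(k+3-j)/(k-j)$ produces the stated thresholds $k \geq 15, 21, 39$ at which the bounds $r \leq 6, 5, 4$ respectively become effective. The sharper bound $r \leq 3$ for $k \geq 139$ with $k \equiv 3, 5, 7 \pmod{8}$ requires a supplementary self-duality argument: the central character of ${\rm Sym}^{k}(\pi)$ is $\omega^{k}$, and the congruence class of $k$ modulo $8$ determines whether the cuspidal summands must be individually self-dual or pair into $\eta \boxplus \widetilde{\eta}$, constraining the allowable degree configurations and eliminating those that would otherwise achieve $r = 4$ in this range.

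The hard part will be the iterated pigeonhole step: managing the combinatorics of the Schur-functor decompositions, which are markedly richer for GL(4) than for GL(3), while carefully routing each triple-product $L$-function computation through the isobaric automorphy hypotheses actually available (on $\widetilde{\pi} \boxtimes \eta_i$ and $\pi \boxtimes \eta_i$) rather than appealing to unproven automorphy of intermediate Rankin--Selberg products such as $\pi \boxtimes {\rm Sym}^{k-1}(\pi)$.
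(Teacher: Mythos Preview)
Your proposal has two genuine gaps that prevent it from reaching the stated bounds.

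\textbf{The main gap: you constrain only one summand, not all of them.} Your identity $\widetilde{\pi}\boxtimes{\rm Sym}^{k}(\pi)={\rm Sym}^{k-1}(\pi)\boxplus\tau_k$ tells you that the cuspidal ${\rm Sym}^{k-1}(\pi)$ sits inside $\widetilde{\pi}\boxtimes\eta_i$ for \emph{some} $i$, and you correctly note this bounds only that one $d_i$. Your remedy is to iterate, pushing down to ${\rm Sym}^{k-3}(\pi)$ inside $\widetilde{\pi}^{\boxtimes 3}\boxtimes{\rm Sym}^{k}(\pi)$, but as you acknowledge, the hypotheses give automorphy only for $\pi\boxtimes\eta_i$ and $\widetilde{\pi}\boxtimes\eta_i$, not for iterated products. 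Your vague promise to ``route through'' the accessible products cannot be made precise: once you write $\widetilde{\pi}\boxtimes(\widetilde{\pi}\boxtimes\eta_i)$ you are outside the hypotheses, and there is no way to detect which cuspidal pieces land in which $\eta_j$ without that automorphy. The paper avoids this entirely by a different maneuver: rather than tracking where ${\rm Sym}^{k-1}(\pi)$ goes, it fixes an \emph{arbitrary} cuspidal summand $\tau$ of ${\rm Sym}^{k}(\pi)$, twists the five-term Schur identity
\[
S_{(m)}+S_{(1,1)}S_{(m-2)}+S_{(1,1,1,1)}S_{(m-4)}=S_{(1)}S_{(m-1)}+S_{(1,1,1)}S_{(m-3)}
\]
by $\widetilde{\tau}$, and uses the pole of $L(s,{\rm Sym}^{k}(\pi)\times\widetilde{\tau})$ to force a pole of either $L(s,{\rm Sym}^{k-1}(\pi)\times\pi\boxtimes\widetilde{\tau})$ or $L(s,{\rm Sym}^{k-3}(\pi)\times\Lambda^3(\pi)\boxtimes\widetilde{\tau})$. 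Since $\Lambda^3(\pi)\simeq\widetilde{\pi}\otimes\omega$, both alternatives give $\dim\tau\geq\tfrac{1}{4}\dim{\rm Sym}^{k-3}(\pi)=(k-2)(k-1)k/24$ for \emph{every} summand $\tau$, and the bound $\mathcal{N}\leq\lfloor D_k/\lceil(k-2)(k-1)k/24\rceil\rfloor$ follows immediately. No iteration is needed, and only the single products $\pi\boxtimes\widetilde{\tau}$, $\widetilde{\pi}\boxtimes\widetilde{\tau}$ are used.

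\textbf{The mod $8$ congruence has nothing to do with self-duality.} There is no reason for ${\rm Sym}^{k}(\pi)$ to be self-dual unless $\pi$ is essentially self-dual, and no pairing $\eta\boxplus\widetilde{\eta}$ is forced. The actual role of $k\equiv 3,5,7\pmod 8$ is arithmetic: precisely in these residue classes one has $8\nmid(k-2)(k-1)k$, so $(k-2)(k-1)k/6$ is not divisible by $4$. Hence in the second alternative above, the containment ${\rm Sym}^{k-3}(\pi)\prec(\pi\otimes\widetilde{\omega})\boxtimes\tau$ is strict, i.e.\ there is a nonzero complement $\eta$ with $(\pi\otimes\widetilde{\omega})\boxtimes\tau={\rm Sym}^{k-3}(\pi)\boxplus\eta$. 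A short bootstrapping lemma (twist this equation by $\widetilde{\eta}$ and use that $\tau$ is cuspidal) then gives $r\leq 4\cdot\dim\eta=4(4r-(k-2)(k-1)k/6)$, i.e.\ $r\geq 2(k-2)(k-1)k/45$, which is the source of the improved bound $3$.
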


\begin{remark}
We also obtain a number of (weaker) bounds for the cases $3 \leq k \leq 14$. See Theorem \ref{mt2-p} and Propositions \ref{p-gl4k3} and \ref{p-gl4k4} for further details. The value of 3 appears to be the lowest bound possible given the method used; this is discussed at the end of Section \ref{pfmt} (see Remark \ref{expec}). 
\end{remark}

It should also be possible to produce analogous results for GL($n$), where $n \geq 5$, though we expect that the bounds obtained would become progressively weaker as $n$ grows large. 

This article is structured as follows. In Section \ref{bkgd}, we establish notation and cover some relevant background. In Section \ref{pfmt}, we establish some key identities and then prove Theorems \ref{mt} and \ref{mt2}. In Section \ref{aexs}, we provide some context for our bounds by considering some examples (under the strong Artin conjecture) of symmetric power decompositions.

\subsection*{Acknowledgements} The author would like to thank K. Martin for his helpful comments and in particular for identifying a number of groups with interesting symmetric power representations, adding cases to Section 4. The author also wishes to thank P. D. Nelson, D. Prasad, and A. Raghuram for their helpful feedback on an earlier draft of this paper.
The research of the author was supported in part by an NSERC Discovery Grant.

\section{Notation and Background} \label{bkgd}

\begin{notation*}
Let $\mathcal{A} ({\rm GL}_n (\A_F))$ denote the set of all automorphic representations for ${\rm GL}(n) /F$, and let the subset $\mathcal{A}_0 ({\rm GL}_n (\A_F))$ consist of those which are cuspidal. 
\end{notation*}

\subsection{Automorphic $L$-functions}\label{autlfn}
Let $\pi \in \mathcal{A} ({\rm GL}_n (\A_F))$. Its $L$-function then has the form 
\begin{align*}
L(s,\pi) = \prod_{v \in \Sigma_F} L_v (s, \pi)
\end{align*}
for ${\rm Re}(s)>1$, where $\Sigma_F$ is the set of all places of $F$. At a finite place $v$ at which $\pi$ is unramified, we have $L_v(s,\pi) = {\rm det} \left(I_n - A_v(\pi) {\rm N}v^{-s}\right)^{-1},$ where
the conjugacy class $A_v (\pi)$ can be represented by the matrix ${\rm diag}(\alpha(v,1), \dots, \alpha(v,n))$. The complex numbers $\alpha (v,j)$, $1 \leq j \leq n$, are known as the Satake parameters for $\pi$ at $v$.

\subsection{Symmetric power, exterior power, and adjoint $L$-functions}\label{symlfn}

Let ${\rm Sym}^k: {\rm GL}_n(\C) \rightarrow {\rm GL}_m(\C)$ be the $k$th symmetric power representation.
We can associate to $\pi$ its $k$th symmetric power $L$-function $$L(s, \pi , {\rm Sym}^k)=\prod_{v \in \Sigma_F} L_v (s, \pi,{\rm Sym}^k),$$ where $s$ lies in a suitable right-half plane. We have 
\begin{align*}
L_v(s, \pi , {\rm Sym}^k) = {\rm det} \left(I_n - {\rm Sym}^k (A_v(\pi)) {\rm N}v^{-s}\right)^{-1}.
\end{align*}
for any finite place $v$ at which $\pi$ is unramified.

We note that in the case of $n = 3$, the degree of the $k$th symmetric power $L$-function is $(k + 1)(k + 2)/2$, and for $n = 4$, the degree of the $k$th symmetric $L$-function is $(k + 1)(k + 2) (k + 3)/6$. As mentioned in the introduction, the Langlands functoriality conjectures predict that these $L$-functions are automorphic; that is, for each $k$ there exists an isobaric automorphic representation $\tau_k$ such that 
$L(s,\tau_k) = L(s, \pi , {\rm Sym}^k).$
When $n > 2$, this is not known for any $k \geq 2$.

In a similar manner, one can also define the $k$th exterior power $L$-functions and the adjoint $L$-function, where we replace ${\rm Sym}^k$ in the definition above by the $k$th exterior power representation, or the adjoint representation, of ${\rm GL}_n(\C)$. When $n = 3$, we note that the exterior square and cube lifts are both automorphic, since $L(s, \pi , \Lambda^2) = L(s, \widetilde{\pi}\otimes \omega_\pi)$ and $L(s, \pi , \Lambda^2) = L(s,\omega_\pi)$, where $\omega_\pi$ is the central character of $\pi$. 
When $n = 4$, the second, third, and fourth power exterior lifts are all automorphic. In particular, the degree 6 $L$-function $L(s, \pi , \Lambda^2)$ is known to be automorphic due to Kim \cite{Ki03} (who proved the correspondence for all local $L$-factors 
except those where $v \mid 2,3$ with $\pi_v$ supercuspidal) and Henniart \cite{He09} (who addressed the remaining cases).
Furthermore, $L(s, \pi , \Lambda^3) = L(s, \widetilde{\pi}\otimes \omega_\pi)$ and $L(s, \pi , \Lambda^4) = L(s,\omega_\pi)$.

Lastly, we note that the adjoint $L$-function associated to $\pi$ has degree $(n ^2 -1)$ and is not known to be automorphic for any $n > 2$.

\subsection{Rankin--Selberg $L$-functions}
Let $\pi \in \mathcal{A} ({\rm GL}_n (\A_F))$ and $\pi' \in \mathcal{A} ({\rm GL}_m (\A_F))$. 
We can then construct the Rankin--Selberg $L$-function 
\begin{align*}
L (s, \pi \times \pi') = \prod_{v \in \Sigma_F} L_v (s, \pi \times \pi'),
\end{align*}
where, for a finite place $v$ at which $\pi$ and $\pi'$ are unramified, we have 
\begin{align*}
L_v (s, \pi \times \pi') = {\rm det}\left(I_{nm} - A_v(\pi) \otimes A_v(\pi') Nv^{-s}  \right)^{-1} ,
\end{align*}
and the Euler product converges absolutely for ${\rm Re}(s)>1$.

Now assume that $\pi$ and $\pi'$ are cuspidal. If $\widetilde{\pi} \simeq \pi' \otimes |\cdot|^{it}$ for some $t \in \R$, then $L (s, \pi \times \pi')$ has simple poles at $s = it $ and $s= 1-it$ and is holomorphic everywhere else.
If not, then $L (s, \pi \times \pi')$ is entire and is invertible at $s=1$.

We say that $L (s, \pi \times \pi')$ is \textit{automorphic} if there exists an automorphic representation $\Pi$ (necessarily for ${\rm GL}(nm)/F$) such that 
\begin{align*}
L (s, \pi \times \pi') = L(s,\Pi).
\end{align*}

\subsection{Isobaric automorphic representations}

Given automorphic representations $\pi_1,$ $ \pi_2, \dots, \pi_k$ for ${\rm GL}(n_1), {\rm GL}(n_2), \dots, {\rm GL}(n_k)$, respectively, over $F$, there exists an automorphic representation $\Pi$ for ${\rm GL}(\sum_{j}n_j)/F$ such that 
\begin{align*}
L(s,\Pi) = \prod_{j}L(s,\pi_j).
\end{align*}
Then $\Pi$ is known as an isobaric automorphic representation and can be written as $\pi_1 \boxplus \pi_2 \boxplus \dots \boxplus \pi_k$. We will use the following notation for this paper: If $\pi_j$ is an isobaric summand of $\Pi$, we will write this as  $\pi_j \prec \Pi$.

\subsection{Schur polynomials}\label{schur}
The vector space of degree $d$ homogeneous symmetric polynomials in $k$ variables has a basis consisting of the Schur polynomials. We define these as follows:
Let $\lambda =( \lambda_1, \dots, \lambda_k)$ denote a partition of $d$, where $\lambda_1 \geq \dots \geq \lambda_k$. Then the \textit{Schur polynomial} with respect to $\lambda$, in variables $x_1, \dots, x_k$, is defined as
\begin{align*}
S_\lambda = \frac{|x_j ^{\lambda_i + k-i}|}{\Delta},
\end{align*}
where $\Delta = \prod_{i < j}(x_i - x_j)$ and $|a_{ij}|$ denotes the determinant of the matrix $(a_{ij})$.

For example, for $k = 3$, here are some of the polynomials that arise in the proofs of Lemmas \ref{prop1} and \ref{prop2}:
\begin{align*}
S_{(1)}&= x+y+z\\
S_{(1,1)}&= xy+xz+yz\\
S_{(1,1,1)}&= xyz\\
S_{(2)} &= 
x^2 + y^2 + z^2 +xy +xz +yz\\
S_{(2,1)} &= x^2 y + x y^2 + x^2 z + y^2 z + x z^2 + y z^2 + 2 x y z \\
S_{(2,2)}&= x^2 y^2 + x^2 z^2  + y^2 z^2 + x^2 y z + x y^2 z + x y z^2. 
\end{align*}

Let $\lambda =( \lambda_1, \dots, \lambda_k)$ and $\mu =( \mu_1, \dots, \mu_k)$ be partitions of numbers $d_1$ and $d_2$, respectively. Given the Littlewood--Richardson rule \cite{LR13,Sc77}, the product of the Schur polynomials $S_\lambda \cdot S_\mu$ can be expressed as $\sum_{\nu} C_{\lambda, \mu, \nu } S_\nu$ where the coefficients $C_{\lambda, \mu, \nu }$ are equal to the number of ways in which the Young diagram for $\lambda$ can be extended strictly by $\mu$ to obtain the Young diagram for the partition $\nu$.

\subsection{The strong Artin conjecture}

Given some number field $F$, consider a representation $\rho: {\rm Gal}(\overline{F} / F) \rightarrow {\rm GL}_n(\C)$. The strong Artin conjecture\cite{La70} implies that there exists a corresponding automorphic representation $\pi$ such that their $L$-functions coincide 
\begin{align*}
L(s,\rho) = L(s,\pi).
\end{align*}

The strong Artin conjecture is known in the case $n = 1$. In the case $n = 2$, this is known for all representations with projective image isomorphic to a dihedral group, tetrahedral group \cite{La80}, octahedral group \cite{Tu81}, or icosahedral group provided the representation is odd (a representation is defined to be odd if ${\rm det}\rho (c) = -1$, where $c$ represents complex conjugation) and $F$ is a totally real field \cite{KW09a,KW09b, Ki09, PS16, Sa19}.
When $n = 3$, this is known when $\rho$ is an induced representation \cite{jpss81} or the twist of a symmetric square representation \cite{GJ78}.
 For $n = 4$, the strong Artin conjecture has been proved for various cases where $\rho$ is solvable \cite{Ra01,Ma03,Ma04}. Various other cases have also been addressed (see \cite{AC90,Wo17,Wo18}).

\section{Proof of theorems} \label{pfmt}

We begin by establishing some identities that will be used in the proofs of the theorems.
\begin{lemma}\label{prop1}
Let $\pi \in \mathcal{A}_0 ({\rm GL}_3 (\A_F))$.
Let ${\rm std}$ denote the standard representation of ${\rm GL}_3(\C)$ and  ${\rm Sym}^{0}$ the trivial representation.
For any integer $m \geq 3$, we have, for $s$ in some suitable right-half plane,
\begin{align}\label{id}
& L^X(s, \pi, {\rm Sym}^{m-1} \times {\rm std}) L^X(s, \pi, {\rm Sym}^{m-3} \times \Lambda^3) \\
=& L^X(s, \pi, {\rm Sym}^{m}) L^X(s,\pi, {\rm Sym}^{m-2} \times \Lambda^2 ) \notag 
\end{align}
where $X$ is the set containing exactly the finite places at which $\pi$ is ramified and all the archimedean places.
\end{lemma}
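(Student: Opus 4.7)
The plan is to reduce the identity of incomplete $L$-functions to an isomorphism of polynomial representations of ${\rm GL}_3(\C)$, which is then verified by Pieri's rule. Since $\pi$ is unramified at every place $v \notin X$, each of the four $L$-functions in the statement is an Euler product over the same set of places, so it suffices to prove the identity factor-by-factor at each unramified $v$.

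Fix such a $v$ and write $A_v(\pi) = {\rm diag}(x,y,z)$. For any finite-dimensional polynomial representation $R$ of ${\rm GL}_3(\C)$, the local factor $L_v(s,\pi,R) = {\rm det}(I - R(A_v(\pi)){\rm N}v^{-s})^{-1}$ is multiplicative under direct sum: if $R \cong R_1 \oplus R_2$, then $L_v(s,\pi,R) = L_v(s,\pi,R_1) L_v(s,\pi,R_2)$. So the lemma reduces to the isomorphism of ${\rm GL}_3(\C)$-representations
\begin{equation*}
{\rm Sym}^{m-1} \otimes {\rm std} \;\oplus\; {\rm Sym}^{m-3} \otimes \Lambda^3 \;\cong\; {\rm Sym}^{m} \;\oplus\; {\rm Sym}^{m-2} \otimes \Lambda^2.
\end{equation*}

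Next I would decompose each side into irreducibles $V^\lambda$ using Pieri's rule (the special case of the Littlewood--Richardson rule recalled in Section~\ref{schur}). On the left, ${\rm Sym}^{m-1} \otimes {\rm std} \cong V^{(m)} \oplus V^{(m-1,1)}$ by adding a single box to the partition $(m-1)$, and ${\rm Sym}^{m-3} \otimes \Lambda^3 \cong V^{(m-2,1,1)}$ since $\Lambda^3 = V^{(1,1,1)}$ is the one-dimensional determinant representation and tensoring by it shifts each row of the partition by one. On the right, ${\rm Sym}^{m-2} \otimes \Lambda^2 \cong V^{(m-1,1)} \oplus V^{(m-2,1,1)}$ by adding a vertical strip of length two to $(m-2)$, where a potential fourth-row contribution is excluded because ${\rm GL}_3(\C)$ has no irreducibles indexed by partitions with more than three rows. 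In total, both sides decompose as $V^{(m)} \oplus V^{(m-1,1)} \oplus V^{(m-2,1,1)}$, establishing the isomorphism.

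There is no real obstacle in this proof beyond bookkeeping: the entire content is the two Pieri computations above, together with the observation that any partition with four or more rows is suppressed in the ${\rm GL}_3$ setting. The only boundary case is $m=3$, where ${\rm Sym}^{m-3}$ becomes the trivial representation and $V^{(m-2,1,1)}$ becomes $\Lambda^3$ itself; one checks directly that the decompositions still match.
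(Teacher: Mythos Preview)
Your proof is correct and takes essentially the same approach as the paper: both reduce to the local unramified factors and then use Pieri's rule (the relevant case of the Littlewood--Richardson rule) to decompose the tensor products, obtaining $V^{(m)}\oplus V^{(m-1,1)}\oplus V^{(m-2,1,1)}$ on each side. The only cosmetic difference is that the paper phrases the computation at the level of Schur polynomials (characters) and matches multisets of eigenvalues, whereas you phrase it directly as an isomorphism of ${\rm GL}_3(\C)$-representations; these are the same argument.
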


\begin{proof}
Using Young diagrams, we obtain the following identities of Schur polynomials in $k$ variables (see Section \ref{schur} for notation) 
\begin{align*}
S_{(m-2)} \cdot S_{(1,1)} &= S_{(m-1,1)} + S_{(m-2,1,1)}\\
S_{(m-1)} \cdot S_{(1)} &= S_{(m)} + S_{(m-1,1)}\\
S_{(m-3)} \cdot S_{(1,1,1)} &= S_{(m-2,1,1)} + S_{(m-3,1,1,1)}.
\end{align*}
For example, the first equation can be obtained by taking Young diagram associated to the partition $(1,1)$ and extending it by the Young diagram associated to the partition $(m-2)$,
 \ytableausetup
 {mathmode, boxsize=1em}
which gives  
\begin{align*}
\begin{ytableau}
\bullet &  & & & \none[...]
&  &  & \\
\bullet
\end{ytableau}\text{ \quad  and  \quad }
\begin{ytableau}
\bullet &  &  & & \none[...]
&  &  \\
\bullet \\
*(white)
\end{ytableau}
\end{align*}
which correspond to the partitions $(m-1,1)$ and $(m-2,1,1)$, respectively.

Setting $k = 3$, the polynomial $S_{(m-3,1,1,1)}$ vanishes and we have that 
\begin{align}\label{eqnd}
S_{(m-1)} \cdot S_{(1)} + S_{(m-3)} \cdot S_{(1,1,1)} = S_{(m)} + S_{(m-2)} \cdot S_{(1,1)}.
\end{align}

For $v \not \in X$, denote the Satake parameters associated to $\pi_v$ as $\{\alpha_v,\beta_v,\gamma_v\}$. 
Then the multiset of monomials, with multiplicity, in $S_{(\ell)}(\alpha_v,\beta_v,\gamma_v)$ is equal to the multiset of eigenvalues of ${\rm Sym}^\ell (A_v(\pi))$ (using the notation introduced in Sections \ref{autlfn} and \ref{symlfn}). The same statement holds true for the pair $S_{(1,1)} (\alpha_v,\beta_v,\gamma_v)$ and $\Lambda^2 (A_v(\pi))$, as well as the pair  $S_{(1,1,1)} (\alpha_v,\beta_v,\gamma_v)$ and $\Lambda^3 (A_v(\pi))$.

We use these in conjunction with identity (\ref{eqnd}) to establish the local analogue of equation (\ref{id}) at each $v \not \in X$, thereby proving the lemma.
\end{proof}

\begin{lemma}\label{prop2}
For $\pi \in \mathcal{A}_0 ({\rm GL}_n (\A_F))$, we have 
\begin{align}\label{prop2id}
L^X(s,\pi, {\rm Sym}^2  \times \widetilde{{\rm Sym}^2 }) L^X(s, \pi, {\rm Sym}^2  \times \widetilde{\Lambda^2}) L^X(s, \pi, \widetilde{{\rm Sym}^2 }\times \Lambda^2 ) L^X(s, \pi, \Lambda^2 \times \widetilde{\Lambda^2} ) \notag \\
= L^X(s,\pi, {\rm Ad} \times {\rm Ad}) L^X(s, \pi, {\rm Ad})^2 L^X(s, 1),
\end{align}
for $s$ in some right-half plane.
\end{lemma}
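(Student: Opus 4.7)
The plan is to follow the same template as the proof of Lemma \ref{prop1}: first establish the identity as an equality of representations of ${\rm GL}_n(\C)$, then substitute the Satake parameters $A_v(\pi)$ at each $v \not\in X$ to obtain the corresponding equality of local $L$-factors, and finally take the Euler product over $v \not\in X$.

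The representation-theoretic identity I would aim to prove is
\begin{align*}
&({\rm Sym}^2 \otimes \widetilde{{\rm Sym}^2}) \oplus ({\rm Sym}^2 \otimes \widetilde{\Lambda^2}) \oplus (\widetilde{{\rm Sym}^2} \otimes \Lambda^2) \oplus (\Lambda^2 \otimes \widetilde{\Lambda^2}) \\
&\qquad = ({\rm Ad} \otimes {\rm Ad}) \oplus {\rm Ad}^{\oplus 2} \oplus \mathbf{1}.
\end{align*}
I would derive this from the two standard decompositions of tensor products involving the standard representation ${\rm std}$ of ${\rm GL}_n(\C)$:
\begin{align*}
{\rm std} \otimes {\rm std} = {\rm Sym}^2 \oplus \Lambda^2 \quad \text{and} \quad {\rm std} \otimes \widetilde{{\rm std}} = {\rm Ad} \oplus \mathbf{1}.
\end{align*}
Tensoring the second relation with itself gives
\begin{align*}
({\rm std} \otimes \widetilde{{\rm std}}) \otimes ({\rm std} \otimes \widetilde{{\rm std}}) = ({\rm Ad} \otimes {\rm Ad}) \oplus {\rm Ad}^{\oplus 2} \oplus \mathbf{1}.
\end{align*}
On the other hand, re-associating the left-hand side as $({\rm std} \otimes {\rm std}) \otimes (\widetilde{{\rm std}} \otimes \widetilde{{\rm std}})$, then applying the first relation to each factor (and using the fact that duality commutes with ${\rm Sym}^2$ and $\Lambda^2$) and distributing, recovers exactly the left-hand side of the desired representation-theoretic identity.

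To conclude, at each $v \not\in X$ I would evaluate both sides of the representation-theoretic identity on the diagonal Satake matrix $A_v(\pi)$. The resulting equality of multisets of eigenvalues yields equality of the characteristic polynomials, and hence of the local $L$-factors at $v$ on both sides of (\ref{prop2id}). Multiplying over all $v \not\in X$ completes the proof. There is no substantive obstacle here; the argument is essentially bookkeeping, and the only care needed is to match each of the four tensor summands on the representation side with its corresponding $L$-factor in (\ref{prop2id}) and to keep in mind that ${\rm Ad}$ denotes the $(n^2-1)$-dimensional trace-free adjoint, so that the trivial summand $\mathbf{1}$ on the right produces precisely the factor $L^X(s,1)$.
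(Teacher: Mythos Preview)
Your argument is correct, and it rests on the same underlying observation as the paper's: both sides of the identity are the character of $({\rm std}\otimes\widetilde{\rm std})^{\otimes 2}$, expanded in two different ways. The paper records this as the Schur polynomial identity
\[
S_{(2)}S_{(2,2)} + S_{(2)}S_{(1)}S_{(1,1,1)} + S_{(2,2)}S_{(1,1)} + S_{(1,1)}S_{(1)}S_{(1,1,1)} = \bigl(S_{(2,1)} + S_{(1,1,1)}\bigr)^2,
\]
which, after factoring each side as $(S_{(2)}+S_{(1,1)})\cdot(S_{(2,2)}+S_{(1)}S_{(1,1,1)}) = (S_{(1)}S_{(1,1)})^2$, is precisely your decomposition twisted by $\det^2$ so that every representation becomes polynomial (for $n=3$ one has $\widetilde{\rm Sym^2}\otimes\det^2 \leftrightarrow S_{(2,2)}$, $\widetilde{\Lambda^2}\otimes\det \leftrightarrow S_{(1)}$, and ${\rm Ad}\otimes\det \leftrightarrow S_{(2,1)}$).

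Your representation-theoretic formulation has the mild advantage of being uniform in $n$: the displayed Schur identity is literally the $n=3$ incarnation (for general $n$ the twist of ${\rm Ad}$ is $S_{(2,1^{n-2})}$, not $S_{(2,1)}$, and similarly for the other duals), whereas your argument via
\[
({\rm std}\otimes{\rm std})\otimes(\widetilde{\rm std}\otimes\widetilde{\rm std}) \;=\; ({\rm std}\otimes\widetilde{\rm std})^{\otimes 2}
\]
needs no modification as $n$ varies. Since the lemma is also invoked for $n=4$ (in Proposition~\ref{p-gl4k4}), your version is in fact slightly more complete.
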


\begin{proof}
This is presumably well-known to the experts
(for example, it is conceptually similar to the identities mentioned in \cite{PR21}), 
and follows from the Schur polynomial identity
 \begin{align*}
 & S_{(2)} \cdot S_{(2,2)}  +  S_{(2)}\cdot S_{(1)} \cdot S_{(1,1,1)}  +  S_{(2,2)}\cdot S_{(1,1)}  + S_{(1,1)}\cdot S_{(1)} \cdot S_{(1,1,1)}\\
 =&  S_{(2,1)}\cdot S_{(2,1)}  +  2 \cdot S_{(2,1)}\cdot S_{(1,1,1)} + S_{(1,1,1)}^2.\notag 
 \end{align*}
\end{proof}

\begin{corollary}\label{ads2}
Fix $n = 3$ and assume that the symmetric square and adjoint lifts of $\pi$ are automorphic. Then ${\rm Sym}^2 (\pi)$ is cuspidal if and only if ${\rm Ad} (\pi)$ is cuspidal.
\end{corollary}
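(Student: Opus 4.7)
My plan is to equate the orders of the pole at $s=1$ on the two sides of the identity in Lemma \ref{prop2}, specialized to $n=3$ with both ${\rm Sym}^2(\pi)$ and ${\rm Ad}(\pi)$ isobaric automorphic.

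For the right-hand side, I would first exploit the self-duality of the adjoint representation: since ${\rm Ad}(\pi)$ is self-contragredient, the Rankin--Selberg $L$-function $L^X(s, \pi, {\rm Ad} \times {\rm Ad})$ has a pole at $s=1$ of order $\mathcal{N}({\rm Ad}(\pi))$ in the multiplicity-free case (more generally it has order $\sum_{j,k:\,\tau_j \simeq \widetilde{\tau_k}} n_j n_k$, which is at least $\mathcal{N}({\rm Ad}(\pi))$). Next, I would observe that $L^X(s, \pi, {\rm Ad})$ is holomorphic and non-vanishing at $s=1$: the factorization $L^X(s, \pi \times \widetilde\pi) = \zeta_F^X(s) \cdot L^X(s, \pi, {\rm Ad})$ has a simple pole on the left (since $\pi$ is cuspidal) and a simple pole from $\zeta_F^X$ on the right, and these cancel. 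Thus $L^X(s, \pi, {\rm Ad})^2$ contributes $0$ to the pole order, while $L^X(s,1)=\zeta_F^X(s)$ contributes a simple pole, yielding an RHS pole order of $\mathcal{N}({\rm Ad}(\pi))+1$.

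For the left-hand side, let $a$ denote the multiplicity of $\Lambda^2(\pi)$ as an isobaric summand of ${\rm Sym}^2(\pi)$. Then $L^X(s, \pi, {\rm Sym}^2 \times \widetilde{{\rm Sym}^2})$ has pole of order $\mathcal{N}({\rm Sym}^2(\pi))$; the two cross terms $L^X(s, \pi, {\rm Sym}^2 \times \widetilde{\Lambda^2})$ and $L^X(s, \pi, \widetilde{{\rm Sym}^2} \times \Lambda^2)$ each have pole of order $a$, vanishing unless $\Lambda^2(\pi)$ appears in ${\rm Sym}^2(\pi)$; and $L^X(s, \pi, \Lambda^2 \times \widetilde{\Lambda^2})$ has a simple pole, since $\Lambda^2(\pi) = \widetilde{\pi}\otimes \omega_\pi$ is cuspidal. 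This gives an LHS pole order of $\mathcal{N}({\rm Sym}^2(\pi)) + 2a + 1$.

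Equating then produces $\mathcal{N}({\rm Ad}(\pi)) = \mathcal{N}({\rm Sym}^2(\pi)) + 2a$. In the forward direction, if ${\rm Sym}^2(\pi)$ is cuspidal, then $\mathcal{N}({\rm Sym}^2(\pi)) = 1$, and since ${\rm Sym}^2(\pi)$ is on ${\rm GL}(6)$ while $\Lambda^2(\pi)$ is on ${\rm GL}(3)$ they cannot coincide as summands, forcing $a = 0$ and $\mathcal{N}({\rm Ad}(\pi)) = 1$. Conversely, $\mathcal{N}({\rm Ad}(\pi)) = 1$ forces $\mathcal{N}({\rm Sym}^2(\pi)) + 2a = 1$, so $a = 0$ and ${\rm Sym}^2(\pi)$ is cuspidal. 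The only subtlety is tracking multiplicities when computing Rankin--Selberg pole orders, since $L(s, \Pi \times \widetilde{\Pi})$ contributes $\sum m_i^2 \geq \mathcal{N}(\Pi)$ rather than $\mathcal{N}(\Pi)$; however, since the argument is driven entirely by the boundary case $\mathcal{N} = 1$, the decomposition is automatically multiplicity-free there and no additional care is needed.
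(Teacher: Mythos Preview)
Your proof is correct and follows essentially the same approach as the paper: both use the identity of Lemma~\ref{prop2} and compare pole orders at $s=1$, exploiting that $L^X(s,\pi,{\rm Ad})$ is regular and nonvanishing at $s=1$ and that $\Lambda^2(\pi)$ is cuspidal. The paper argues via inequalities (``not cuspidal $\Rightarrow$ pole order $\geq 2$'' on each side), whereas you compute the pole orders more explicitly and reduce to the boundary case $\mathcal{N}=1$; both are the same idea, and your added remarks on self-duality of ${\rm Ad}(\pi)$ and on multiplicities make explicit what the paper leaves implicit.
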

\begin{proof}
If ${\rm Sym}^2 (\pi)$ is not cuspidal, then 
\begin{align*}
- {\rm ord}_{s = 1}L(s, {\rm Sym}^2 (\pi) \times \widetilde{{\rm Sym}^2 (\pi)}) \geq 2,
\end{align*}
and so the left-hand side of equation (\ref{prop2id}) has a pole of order at least 3 at $s=1$. This implies
\begin{align*}
- {\rm ord}_{s = 1}L(s, {\rm Ad} (\pi) \times {\rm Ad} (\pi)) \geq 2,
\end{align*}
which means that the adjoint lift cannot be cuspidal. We proceed similarly in the other direction. 
\end{proof}

\begin{lemma}\label{p-id2}
If $\pi \in \mathcal{A}_0({\rm GL}_4(\A_F))$, then for any integer $m \geq 4$ we have 
\begin{align}\label{id2}
& L^X(s, \pi,{\rm Sym}^{m}) L^X(s, \pi, {\rm Sym}^{m-2} \times \Lambda^2)L^X(s, {\rm Sym}^{m-4}\times \Lambda^4) \\
=& L^X(s, \pi, {\rm Sym}^{m-1} \times  {\rm std}) L^X(s,\pi, {\rm Sym}^{m-3} \times \Lambda^3) \notag 
\end{align}
for $s$ in some right-half plane.
\end{lemma}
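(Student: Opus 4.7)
The plan is to mimic the proof of Lemma \ref{prop1}, but now working with Schur polynomials in four variables corresponding to the Satake parameters of a representation of ${\rm GL}_4$. The target identity is symmetric in structure: the left-hand side packages $\mathrm{Sym}^m$, $\mathrm{Sym}^{m-2}\otimes \Lambda^2$, and $\mathrm{Sym}^{m-4}\otimes \Lambda^4$, while the right-hand side packages $\mathrm{Sym}^{m-1}\otimes\mathrm{std}$ and $\mathrm{Sym}^{m-3}\otimes \Lambda^3$. This pattern comes from Pieri's rule, since multiplying a single-row Schur polynomial $S_{(j)}$ by $S_{(1^i)}$ adds a vertical strip of length $i$, and in the four-variable setting the new partition must have at most four parts.

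First, I would apply Pieri's rule in $k$ variables to record
\begin{align*}
S_{(m-1)}\cdot S_{(1)} &= S_{(m)} + S_{(m-1,1)},\\
S_{(m-2)}\cdot S_{(1,1)} &= S_{(m-1,1)} + S_{(m-2,1,1)},\\
S_{(m-3)}\cdot S_{(1,1,1)} &= S_{(m-2,1,1)} + S_{(m-3,1,1,1)},\\
S_{(m-4)}\cdot S_{(1,1,1,1)} &= S_{(m-3,1,1,1)} + S_{(m-4,1,1,1,1)}.
\end{align*}
Each identity is read off from the diagrams exactly as illustrated in Lemma \ref{prop1}. Now I specialize to $k=4$, which forces $S_{(m-4,1,1,1,1)}=0$ (since a partition with more than four nonzero parts gives the zero polynomial in four variables). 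Adding the first and third identities and comparing with the sum of the second and fourth, the intermediate terms $S_{(m-1,1)}$, $S_{(m-2,1,1)}$, $S_{(m-3,1,1,1)}$ cancel, leaving
\begin{align*}
S_{(m-1)}\cdot S_{(1)} + S_{(m-3)}\cdot S_{(1,1,1)}
 = S_{(m)} + S_{(m-2)}\cdot S_{(1,1)} + S_{(m-4)}\cdot S_{(1,1,1,1)}.
\end{align*}

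Next, for any $v\notin X$, let $\{\alpha_v,\beta_v,\gamma_v,\delta_v\}$ denote the Satake parameters of $\pi_v$. Then, as in the proof of Lemma \ref{prop1}, the multiset of monomials in $S_{(\ell)}(\alpha_v,\beta_v,\gamma_v,\delta_v)$ matches the eigenvalues of $\mathrm{Sym}^\ell(A_v(\pi))$, and $S_{(1^j)}$ evaluated at the Satake parameters gives the eigenvalues of $\Lambda^j(A_v(\pi))$. Evaluating the Schur identity at the Satake parameters of $\pi_v$ and taking the corresponding local $L$-factors (noting that $\mathrm{Sym}^{m-1}\otimes\mathrm{std}$, $\mathrm{Sym}^{m-3}\otimes\Lambda^3$, $\mathrm{Sym}^{m-2}\otimes\Lambda^2$, $\mathrm{Sym}^{m-4}\otimes\Lambda^4$ correspond to the products $S_{(m-1)}\cdot S_{(1)}$, $S_{(m-3)}\cdot S_{(1,1,1)}$, $S_{(m-2)}\cdot S_{(1,1)}$, $S_{(m-4)}\cdot S_{(1,1,1,1)}$ respectively) yields the local analogue of equation (\ref{id2}) at each $v\notin X$. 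Taking the product over these places gives the claimed global identity.

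There is no real obstacle here; the only mild subtlety is bookkeeping the cancellations of the three intermediate Schur polynomials and being careful to invoke the four-variable vanishing $S_{(m-4,1,1,1,1)}=0$ that makes the identity close up. This is the direct analogue of setting $k=3$ in Lemma \ref{prop1} where the term $S_{(m-3,1,1,1)}$ vanished.
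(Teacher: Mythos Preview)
Your proposal is correct and follows essentially the same approach as the paper: both derive the same four Pieri/Young-diagram identities for $S_{(m-j)}\cdot S_{(1^j)}$, specialize to $k=4$ so that $S_{(m-4,1,1,1,1)}$ vanishes, combine them to obtain the key Schur polynomial identity, and then pass to local $L$-factors via the Satake parameters. The only cosmetic difference is that you explicitly name Pieri's rule and spell out the telescoping cancellation, whereas the paper simply lists the four identities and states the resulting sum.
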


\begin{proof}
Using Young diagrams, we establish the following identities of Schur polynomials (in $k$ variables)
\begin{align*}
S_{(1,1)}\cdot S_{(m-2)} &= S_{(m-1,1)}+S_{(m-2,1,1)}\\
S_{(1,1,1,1)}\cdot S_{(m-4)} &= S_{(m-3,1,1,1)}+ S_{(m-4,1,1,1,1)} \\
S_{(m-1)}\cdot S_{(1)} &= S_{(m)} + S_{(m-1,1)} \\
S_{(1,1,1)} \cdot S_{(m-3)} &= S_{(m-2,1,1)} + S_{(m-3,1,1,1)}.
\end{align*}
Setting $k = 4$, the polynomial $S_{(m-4,1,1,1,1)}$ in the second equation vanishes and we obtain 
\begin{align*}
S_{(m)} + S_{(1,1)}\cdot S_{(m-2)} + S_{(1,1,1,1)}\cdot S_{(m-4)} = S_{(m-1)}\cdot S_{(1)} + S_{(1,1,1)} \cdot S_{(m-3)}.
\end{align*}
This allows us to establish the local analogue of identity (\ref{id2}) for all $v \not \in X$, and so proving the lemma.
\end{proof}

\begin{lemma}\label{lem}
Let  $A,B,C,$ and $D$ be automorphic representations for ${\rm GL}(a),$ ${\rm GL}(b),$ $ {\rm GL}(c) ,$ and $ {\rm GL}(d)$, respectively, all over $F$. Let $S$ be any finite set of places of $F$. Assume that the Rankin--Selberg product of $B$ and $\widetilde{D}$ is automorphic. If  $C$ is cuspidal and 
\begin{align*}
L^S(s, A)L^S(s, B)= L^S(s, C \times D)
\end{align*}
then we must have $ c \leq b \cdot d$.
\end{lemma}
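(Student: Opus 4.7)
The plan is to show that the cuspidal representation $C$ appears as an isobaric summand of the isobaric automorphic representation $\Pi$ on ${\rm GL}(bd)$ that realizes $B \times \widetilde{D}$; once this is established, the bound $c \leq bd$ follows at once since every cuspidal summand of $\Pi$ has degree at most $bd$.

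The first step is to derive from the hypothesis the Euler-product identity
\begin{align*}
L^S(s, B \times \widetilde{C} \times \widetilde{D}) = L^S(s, B \times \widetilde{A}) \cdot L^S(s, B \times \widetilde{B}).
\end{align*}
This is a Satake-parameter manipulation: at each $v \notin S$, the hypothesis forces the multiset equality $\{\alpha_{i}\} \cup \{\beta_{j}\} = \{\gamma_{p}\delta_{q}\}$ of Satake parameters, and hence the corresponding identity $\{\alpha_{i}^{-1}\} \cup \{\beta_{j}^{-1}\} = \{\gamma_{p}^{-1}\delta_{q}^{-1}\}$. Multiplying through by the Satake parameters of $B$ gives the claimed identity place by place.

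Next I would compare orders of poles at $s=1$. Writing the isobaric decomposition $B = \boxplus_j n_j B_j$, Jacquet--Shalika gives $-{\rm ord}_{s=1} L^S(s, B \times \widetilde{B}) = \sum_j n_j^2 \geq 1$, while $L^S(s, B \times \widetilde{A})$ is non-vanishing at $s=1$ (being a product of Rankin--Selberg $L$-functions of isobaric automorphic representations, each holomorphic or with a pole at $s=1$, but never a zero). Thus the right-hand side of the factorization has pole of order at least $1$ at $s=1$.

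Finally, the hypothesis that $B \times \widetilde{D}$ is automorphic allows the left-hand side to be identified with the Rankin--Selberg $L$-function $L^S(s, \Pi \times \widetilde{C})$, and by Jacquet--Shalika the order of its pole at $s=1$ equals the multiplicity of $C$ as a cuspidal summand of $\Pi$. Combined with the previous step, this multiplicity is at least $1$, so $C \prec \Pi$ and therefore $c \leq bd$. The main delicate point is the Satake-level bookkeeping producing the factorization in the first step; everything else is an application of standard properties of Rankin--Selberg $L$-functions for isobaric automorphic representations.
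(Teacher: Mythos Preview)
Your proof is correct and follows essentially the same route as the paper: tensor the given $L$-function identity by $\widetilde{B}$ (you do this by first dualizing and then tensoring by $B$, which amounts to the same thing), use $-{\rm ord}_{s=1}L^S(s,B\times\widetilde{B})\geq 1$ together with non-vanishing of $L^S(s,A\times\widetilde{B})$ at $s=1$, and then invoke automorphy of $B\boxtimes\widetilde{D}$ to conclude that $C$ (equivalently $\widetilde{C}$) occurs as an isobaric summand. The paper's argument is phrased slightly more tersely but is the same in substance.
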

\begin{proof}
From 
\begin{align*}
L^S(s, A)L^S(s, B)= L^S(s, C \times D)
\end{align*}
we have 
\begin{align*}
L^S(s, A \times \widetilde{B})L^S(s, B \times \widetilde{B})= L^S(s, C \times D \boxtimes \widetilde{B}).
\end{align*}
Since $-{\rm ord}_{s=1}L^S(s, B \times \widetilde{B}) \geq 1$ and
$-{\rm ord}_{s=1}L^S(s, A \times \widetilde{B}) \geq 0$, we have that
$L^S(s, C \times D \boxtimes \widetilde{B}) \geq 1$. Therefore $\widetilde{C} \prec D \boxtimes \widetilde{B}$ (recall from Section \ref{bkgd} that this notation means $\widetilde{C}$ is an isobaric summand of $D \boxtimes \widetilde{B}$), implying the inequality.
\end{proof}

The case of $k \geq 4$ in Theorem \ref{mt} will follow from:
 \begin{theorem}\label{mt-p}
Fix $k \geq 4$. Consider $\pi \in \mathcal{A}_0({\rm GL}_3(\A_F))$ such that ${\rm Sym}^m (\pi)$ is automorphic for $k-3 \leq m \leq k$, cuspidal for $m=k-3, k-1$, and the Rankin--Selberg product of $\pi $ or $ \widetilde{\pi}$ with any summand of ${\rm Sym}^k (\pi)$ is automorphic. Then 
 \begin{align*}
 \mathcal{N}({\rm Sym}^{k}(\pi)) \leq  \left\lfloor \frac{(k + 1)(k + 2)/2}{\left\lceil {k (k+1)}/{6}\right\rceil} \right\rfloor ,
 \end{align*}
 and if $k \equiv 1 \bmod 3$, we have a stronger bound of
 \begin{align*}
 \mathcal{N}({\rm Sym}^{k}(\pi)) \leq \left\lfloor \frac{(k + 1)(k + 2) /2}{\left\lceil {3k (k+1)}/{16}\right\rceil} \right\rfloor .
 \end{align*}
 \end{theorem}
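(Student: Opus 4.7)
The approach is to leverage Lemma \ref{prop1} at $m = k$ combined with Lemma \ref{lem}. Since $\pi$ lives on ${\rm GL}(3)$, we have $\Lambda^2(\pi) = \widetilde{\pi} \otimes \omega_\pi$ and $\Lambda^3(\pi) = \omega_\pi$, so writing $\tau_m := {\rm Sym}^m(\pi)$, Lemma \ref{prop1} becomes
\begin{align*}
L^X(s, \tau_{k-1} \times \pi)\, L^X(s, \tau_{k-3} \otimes \omega_\pi) = L^X(s, \tau_k)\, L^X(s, \tau_{k-2} \times \widetilde{\pi} \otimes \omega_\pi).
\end{align*}
Under the hypotheses each of the four factors is the $L$-function of an isobaric automorphic representation (using the cuspidality of $\tau_{k-1}$ and $\tau_{k-3}$ together with the Rankin--Selberg hypothesis), so strong multiplicity one for isobaric representations upgrades this to the isobaric identity
\begin{align*}
\tau_k \boxplus \bigl(\tau_{k-2} \times \widetilde{\pi} \otimes \omega_\pi\bigr) = \bigl(\tau_{k-1} \times \pi\bigr) \boxplus \bigl(\tau_{k-3} \otimes \omega_\pi\bigr).
\end{align*}

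Given any cuspidal summand $\sigma \prec \tau_k$, matching in this identity forces either $\sigma = \tau_{k-3} \otimes \omega_\pi$ (cuspidal of dimension $(k-1)(k-2)/2$) or $\sigma \prec \tau_{k-1} \times \pi$. In the latter case I write $\tau_{k-1} \times \pi = \sigma \boxplus E$ and apply Lemma \ref{lem} with $(A,B,C,D) = (E,\sigma,\tau_{k-1},\pi)$; the cuspidality of $\tau_{k-1}$ is part of the hypothesis, and the automorphy of $\sigma \times \widetilde{\pi}$ is supplied by the Rankin--Selberg assumption on summands of $\tau_k$. The lemma yields $\dim \tau_{k-1} \leq 3 \dim \sigma$, and since $\dim \sigma$ is a positive integer this gives $\dim \sigma \geq \lceil k(k+1)/6 \rceil$. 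A short arithmetic check shows $(k-1)(k-2)/2 \geq \lceil k(k+1)/6 \rceil$ for $k \geq 5$, so this lower bound holds for every cuspidal summand of $\tau_k$ (with the low case $k=4$ handled separately). Dividing the total dimension $(k+1)(k+2)/2$ by this bound yields the first inequality.

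For the sharper estimate when $k \equiv 1 \pmod 3$, observe that $k(k+1) \equiv 2 \pmod 6$, so $k(k+1)/6 \notin \Z$ and in particular the extremal isobaric decomposition of $\tau_{k-1} \times \pi$ into nine equal pieces of dimension $k(k+1)/6$ is excluded. To convert this arithmetic restriction into a uniform dimension bound, I would analyse the residue of the previous argument: the inclusion $\tau_{k-1} \prec \sigma \times \widetilde{\pi}$ furnished in the proof of Lemma \ref{lem} gives $\sigma \times \widetilde{\pi} = \tau_{k-1} \boxplus W$, where, when $\dim\sigma$ equals the a priori minimum $\lceil k(k+1)/6 \rceil = (k(k+1)+4)/6$, the residue $W$ has dimension only $2$. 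Taking a further Rankin--Selberg product of this identity with $\pi$ and using the decomposition $\widetilde{\pi} \times \pi = 1 \boxplus {\rm Ad}(\pi)$ one obtains $\sigma \times {\rm Ad}(\pi) = E \boxplus (W \times \pi)$, and the plan is to eliminate small admissible dimensions of $W$ (by central character considerations and by reapplying Lemma \ref{lem} to $W$) so as to force $\dim \sigma \geq \lceil 3k(k+1)/16 \rceil$. The principal obstacle is exactly this final step: passing from the mod-$3$ non-divisibility (which caps the \emph{number} of cuspidal summands of $\tau_{k-1} \times \pi$ at eight) to a uniform lower bound on each individual summand of $\tau_k$, since a direct averaging argument would only bound the minimum summand size from above rather than below.
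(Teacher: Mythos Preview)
Your first half is correct and matches the paper: the isobaric upgrade of Lemma~\ref{prop1}, the dichotomy $\sigma = \tau_{k-3}\otimes\omega_\pi$ versus $\sigma \prec \tau_{k-1}\times\pi$, and the application of Lemma~\ref{lem} to extract $\dim\sigma \geq \lceil k(k+1)/6\rceil$ are exactly what the paper does (the paper phrases it as $\tau_{k-1}\prec \widetilde{\pi}\boxtimes\sigma$, but this is the content of the proof of Lemma~\ref{lem} anyway). The special treatment of $k=4$ is also needed, as you note.

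The genuine gap is in the second part, and it is a case of stopping one step short rather than needing a new idea. You already have the decomposition $\sigma\times\widetilde{\pi} = \tau_{k-1}\boxplus W$. Now simply apply Lemma~\ref{lem} \emph{again} to this identity, but with the roles $(A,B,C,D) = (\tau_{k-1},\,W,\,\sigma,\,\widetilde{\pi})$: here $C=\sigma$ is cuspidal (it is a summand of $\tau_k$), and the conclusion $c\leq b\cdot d$ reads $\dim\sigma \leq 3\dim W = 3\bigl(3\dim\sigma - k(k+1)/2\bigr)$, whence $\dim\sigma \geq \lceil 3k(k+1)/16\rceil$. This is precisely the paper's argument, and it makes no use whatsoever of the congruence $k\equiv 1\pmod 3$; your reading of that hypothesis as an arithmetic obstruction to ``nine equal pieces'' is a red herring, and the detour through ${\rm Ad}(\pi)$ and $W\times\pi$ is unnecessary. (The congruence condition in the statement is there only to record when this sharper lower bound on $\dim\sigma$ actually yields an improved floor in the final count, not because the derivation of the bound requires it.) One does need, for this second invocation of Lemma~\ref{lem}, that $W\times\pi$ be automorphic; the paper does not comment on this, and you may wish to note it.
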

\begin{proof}[Proof of Theorem \ref{mt-p}]
Assume that ${\rm Sym}^{k}(\pi)$ is not cuspidal. Then it has a cuspidal
isobaric summand $\tau$ for some GL($r$) such that 
$$r \leq \left\lfloor \frac{(k+1)(k+2)}{4} \right\rfloor.$$ So
\begin{align*}
-{\rm ord}_{s=1}L^X(s, {\rm Sym}^{k}(\pi) \times \widetilde{\tau})\geq 1.
\end{align*}
Given our assumption on the automorphy of the Rankin--Selberg product of $\pi$ with summands of ${\rm Sym}^k (\pi)$, we also have 
\begin{align*}
-{\rm ord}_{s=1}L^X(s, {\rm Sym}^{k-2}(\pi) \times \Lambda^2 (\pi) \boxtimes \widetilde{\tau})\geq 0,
\end{align*}
due to the non-vanishing of Rankin--Selberg $L$-functions at $s=1$.
This implies, via equation (\ref{id}),
that 
\begin{align*}
-{\rm ord}_{s=1}L^X(s, {\rm Sym}^{k-1}(\pi) \times \pi \boxtimes \widetilde{\tau})  -{\rm ord}_{s=1}L^X(s, {\rm Sym}^{k-3}(\pi) \otimes \omega \times \widetilde{\tau})\geq 1,
\end{align*}
where $\omega$ is the central character of $\pi$.

On the one hand, if 
\begin{align} \label{lfeq1}
-{\rm ord}_{s=1}L^X(s, {\rm Sym}^{k-3}(\pi) \otimes \omega \times \widetilde{\tau})\geq 1,
\end{align}
then
$r = (k-2)(k-1)/2.$

On the other hand, if
\begin{align*}
-{\rm ord}_{s=1}L^X(s, {\rm Sym}^{k-3}(\pi) \otimes \omega \times \widetilde{\tau})=0,
\end{align*}
then 
\begin{align}\label{lfeq2}
-{\rm ord}_{s=1}L^X(s, {\rm Sym}^{k-1}(\pi) \times \pi \boxtimes \widetilde{\tau}) \geq 1
\end{align}
and so ${\rm Sym}^{k-1}(\pi) \prec \widetilde{\pi}\boxtimes \tau$. Therefore 
$k (k + 1)/2 \leq 3  r$ and so
\begin{align}\label{eqrbd}
r \geq \left\lceil \frac{k (k+1)}{6}\right\rceil.
\end{align}
We get
\begin{align}\label{eqbd}
 \mathcal{N}({\rm Sym}^{k}(\pi)) \leq  \left\lfloor \frac{(k + 1)(k + 2)/2}{\left\lceil {k (k+1)}/{6}\right\rceil} \right\rfloor ,
\end{align}
for $k \geq 5$.

When $k = 4$ and equation (\ref{lfeq1}) holds, we address the case of $r = (k-2)(k-1)/2$.
In this case $r = 3$ and so $\tau = \pi \otimes \omega$. Therefore,
\begin{align*}
&-{\rm ord}_{s=1}L^X(s, {\rm Sym}^{k}(\pi) \times \widetilde{\tau})\geq 1 \text{\ \  and }\\
&-{\rm ord}_{s=1}L^X(s, {\rm Sym}^{k-2}(\pi) \times \Lambda^2 (\pi) \boxtimes \widetilde{\tau}) \geq 1,
\end{align*}
where for the latter equation we have used that
\begin{align*}
L^X(s, {\rm Sym}^{2}(\pi) \times \Lambda^2 (\pi) \boxtimes \widetilde{\tau}) 
= L^X(s, {\rm Sym}^2 (\pi) \times \widetilde{{\rm Sym}^2 (\pi)}) L^X(s, {\rm Sym}^2 (\pi) \times \widetilde{\Lambda^2 (\pi)}).
\end{align*}
Given that
\begin{align*}
-{\rm ord}_{s=1}L^X(s, {\rm Sym}^{k-1}(\pi) \times \pi \boxtimes \widetilde{\tau})&=0 \text{\ \  and }\\
-{\rm ord}_{s=1}L^X(s, {\rm Sym}^{k-3}(\pi) \otimes \omega \times \widetilde{\tau})&=1,
\end{align*}
we have a contradiction, so equations (\ref{lfeq2}) and therefore (\ref{eqbd}) hold.

\ 

If equation (\ref{lfeq2}) holds and $k \equiv 1 \pmod 3$, then we can improve on the bound in (\ref{eqbd}) for $k \geq 19$. From (\ref{lfeq2}), we have
\begin{align*}
{\rm Sym}^{k-1}(\pi) \boxplus \eta =  \widetilde{\pi}\boxtimes \tau
\end{align*}
for some automorphic representation $\eta$ for ${\rm GL}(3r- k(k + 1)/2 )$ over $F$. Given Lemma \ref{lem}, we obtain
\begin{align*}
r &\leq 3 \left(3r - \frac{k (k + 1)}{2}\right) \\
r &\geq \left\lceil \frac{3}{16} k (k + 1) \right\rceil.
\end{align*}
Therefore,
\begin{align*}
 \mathcal{N}({\rm Sym}^{k}(\pi)) \leq  \left\lfloor \frac{(k + 1)(k + 2)/2}{\left\lceil {3k (k+1)}/{16}\right\rceil} \right\rfloor .
\end{align*}
\end{proof}

The following two propositions will address the cases of $k=2$ and $k=3$, which are not covered by Theorem \ref{mt-p}.

\begin{proposition}\label{propk2}
Given $\pi \in \mathcal{A}_0({\rm GL}_3(\A_F))$, assume that its symmetric square lift is isobaric automorphic. Then $$\mathcal{N}({\rm Sym}^2 (\pi)) \leq 4.$$ Furthermore, this bound is sharp.
\end{proposition}

\begin{proof}
We will bound the number of Hecke characters that are isobaric summands of ${\rm Sym}^2 (\pi)$.
If there are none, then $\mathcal{N}({\rm Sym}^2 (\pi)) \leq 3$. Therefore, let us assume that there exists a Hecke character $\omega$ that is a summand of ${\rm Sym}^2 (\pi)$.
Given that 
\begin{align}\label{sym2id}
L^X(s, \pi \times \pi \otimes \omega) = L^X(s, {\rm Sym}^2 (\pi) \otimes \omega) L^X(s, \Lambda^2 (\pi) \otimes \omega),
\end{align}
the first $L$-function must have a pole at $s=1$, and so $\widetilde{\pi} = {\pi \otimes \omega}$. We define $T:= \{\omega \in \mathcal{A}({\rm GL}_1(\A_F)): \pi \otimes \omega = \widetilde{\pi}\}$.

In our setting, we know that  $\pi$ can only admit self-twists by at most nine different Hecke characters (including the trivial character). Indeed,
if $\pi$ admits a self-twist $\omega$, then 
\begin{align*}
-{\rm ord}_{s=1}L^X(s, (\pi \otimes \omega) \times \widetilde{\pi}) = 1,
\end{align*}
 so, given the automorphy of $\pi \boxtimes \widetilde{\pi}$ (since it is equal to the automorphic representation $({\rm Sym}^2 (\pi) \boxplus \Lambda^2 (\pi)) \otimes \omega$), it would mean that $\omega$ is a summand of $\pi \boxtimes \widetilde{\pi}$, thus there are only at most nine such possible characters.

Let $S:=\{\mu \in  \mathcal{A}({\rm GL}_1(\A_F)): \pi \otimes \mu = \pi\}$ be the multiplicative group of self-twists of $\pi$. Since $S$ acts (by multiplication) faithfully and transitively on the set $T$, they have the same cardinality. Since the non-trivial self-twists must have order three, the size of $S$ is either 1, 3, 5, 7, or 9.
Given equation (\ref{sym2id}), $T$ cannot be of size 5, 7, or 9. If $|T|= 1$ or $3$, then $\mathcal{N}({\rm Sym}^2 (\pi)) \leq 4.$

To establish that this bound is sharp, we first note that the group  $A_4$  has a 3-dimensional irreducible representation whose symmetric square decomposes into three 1-dimensional representations and one 3-dimensional irreducible representation.
An example of a 3-dimensional irreducible Artin representation with image isomorphic to $A_4$ can be found in \cite{lmfdb} under the label \href{https://www.lmfdb.org/ArtinRepresentation/3.3136.4t4.a.a}{3.3136.4t4.a.a}.
The strong Artin conjecture is known in this case \cite{AC90},
and so this representation corresponds to a cuspidal automorphic representation $\pi$ for GL(3) where $\mathcal{N}({\rm Sym}^2 (\pi)) = 4$.
\end{proof}

\begin{proposition}\label{propk3}
For $\pi \in \mathcal{A}_0({\rm GL}_3(\A_F))$, assume that its symmetric square lift is automorphic and furthermore cuspidal, and that its adjoint and symmetric cube lifts are isobaric automorphic. Then 
\begin{align*}
\mathcal{N}({\rm Sym}^3 (\pi)) \leq 3.
\end{align*}
\end{proposition}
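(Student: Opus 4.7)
The plan is to specialize Lemma \ref{prop1} to $m=3$; using the identifications $\Lambda^2(\pi) = \widetilde{\pi}\otimes\omega_\pi$ and $\Lambda^3(\pi) = \omega_\pi$ and cancelling the common factor $L^X(s,\omega_\pi)$ on both sides, I obtain the Euler product identity
\begin{equation*}
L^X(s, {\rm Sym}^2(\pi) \times \pi) = L^X(s, {\rm Sym}^3(\pi))\, L^X(s, {\rm Ad}(\pi) \otimes \omega_\pi).
\end{equation*}
Corollary \ref{ads2} ensures ${\rm Ad}(\pi)$ is cuspidal (of degree $8$). I will twist this identity by $\widetilde{\tau}$ for each cuspidal isobaric summand $\tau$ of ${\rm Sym}^3(\pi)$ and analyze orders of poles at $s=1$.

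First, I would rule out Hecke character summands. If a Hecke character $\chi$ were such a summand, the factor $L^X(s, {\rm Ad}(\pi)\otimes\omega_\pi \times \widetilde{\chi})$ is pole-free at $s=1$ (as ${\rm Ad}(\pi)$ is cuspidal of degree $8 \ne 1$), so the identity would force a pole of $L^X(s, {\rm Sym}^2(\pi) \times \pi\otimes\widetilde{\chi})$, which is impossible since ${\rm Sym}^2(\pi)$ and $\pi\otimes\widetilde{\chi}$ are cuspidal of different degrees. Hence every cuspidal summand of ${\rm Sym}^3(\pi)$ has degree at least $2$, and if $\mathcal{N}({\rm Sym}^3(\pi)) \geq 4$ the decomposition must match one of the partitions $(2,2,2,4)$, $(2,2,3,3)$, or $(2,2,2,2,2)$ of $10$, each of which contains at least two degree-$2$ summands.

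For a degree-$2$ cuspidal summand $\tau$, the adjoint factor is again pole-free (degree $8$ versus degree $2$), so the twisted identity produces a pole at $s=1$ of $L^X(s, {\rm Sym}^2(\pi) \times \pi \times \widetilde{\tau})$. By Kim--Shahidi's GL$(2)\times$GL$(3)$ functoriality, $\pi\boxtimes\widetilde{\tau}$ is isobaric automorphic for GL$(6)$, and the triple product equals the Rankin--Selberg $L^X(s, {\rm Sym}^2(\pi) \times (\pi\boxtimes\widetilde{\tau}))$; since ${\rm Sym}^2(\pi)$ is cuspidal of degree $6$, pole-order analysis forces $\pi\boxtimes\widetilde{\tau} \simeq \widetilde{{\rm Sym}^2(\pi)}$ and shows $\tau$ appears in ${\rm Sym}^3(\pi)$ with multiplicity exactly $1$.

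The main obstacle is then to show that at most one such $\tau$ can exist, contradicting the presence of two degree-$2$ summands. For two candidates $\tau_1, \tau_2$, I would compute the pole at $s=1$ of $L^X(s, (\widetilde{\pi}\boxtimes\tau_1) \times (\pi\boxtimes\widetilde{\tau_2}))$ in two ways: directly it equals $L^X(s, {\rm Sym}^2(\pi) \times \widetilde{{\rm Sym}^2(\pi)})$ and so has a simple pole; and by rearranging Satake parameters using $\pi\boxtimes\widetilde{\pi} = {\rm Ad}(\pi) \boxplus 1$ together with Ramakrishnan's GL$(2)\times$GL$(2)$ automorphy, it also equals
\begin{equation*}
L^X(s, {\rm Ad}(\pi) \times (\tau_1\boxtimes\widetilde{\tau_2}))\, L^X(s, \tau_1 \times \widetilde{\tau_2}),
\end{equation*}
where the first factor is pole-free (degree $8$ versus degree at most $4$). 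Hence $L^X(s, \tau_1 \times \widetilde{\tau_2})$ has a simple pole at $s=1$, forcing $\tau_1 \simeq \tau_2$ and completing the contradiction.
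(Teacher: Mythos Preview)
Your proof is correct and follows essentially the same strategy as the paper's: both arguments use the $m=3$ case of Lemma~\ref{prop1}, rule out Hecke character summands by a pole count, show via Kim--Shahidi that any ${\rm GL}(2)$ summand $\tau$ forces $\pi\boxtimes\widetilde{\tau}\simeq\widetilde{{\rm Sym}^2(\pi)}$ (hence has multiplicity one), and then use cuspidality of ${\rm Ad}(\pi)$ together with Ramakrishnan's ${\rm GL}(2)\times{\rm GL}(2)$ result to show two such summands must coincide. Your preliminary cancellation of the common factor $L^X(s,\omega_\pi)$ to rewrite the identity as $L^X(s,{\rm Sym}^2(\pi)\times\pi)=L^X(s,{\rm Sym}^3(\pi))\,L^X(s,{\rm Ad}(\pi)\otimes\omega_\pi)$ is a nice streamlining that makes the subsequent pole analysis slightly more transparent than the paper's version, but the underlying argument is the same.
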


\begin{proof}
We follow the beginning of the proof of theorem \ref{mt-p} to establish that either equation (\ref{lfeq1}) or (\ref{lfeq2}) holds.

If equation (\ref{lfeq1}) is true, then $\tau = \omega$. However, this implies that 
\begin{align*}
-{\rm ord}_{s=1}L^X(s, \pi \times \Lambda^2 (\pi) \boxtimes \widetilde{\tau})= 1,
\end{align*}
and so, using (\ref{id}) with $m = 3$, we have 
\begin{align*}
-{\rm ord}_{s=1}L^X(s, {\rm Sym}^{2}(\pi) \times \pi \boxtimes \widetilde{\tau})  -{\rm ord}_{s=1}L^X(s,  \omega \times \widetilde{\tau})\geq 2.
\end{align*}
This is a contradiction since 
\begin{align*}
-{\rm ord}_{s=1}L^X(s, {\rm Sym}^{2}(\pi) \times \pi \boxtimes \widetilde{\tau})&=0, \text{ and }\\
-{\rm ord}_{s=1}L^X(s, \omega \times \widetilde{\tau})&=1.
\end{align*}
This proves that the symmetric cube cannot have any Hecke characters as isobaric summands. 
Therefore equation (\ref{lfeq2}) is true.

Now we show that the symmetric cube cannot have more than one cuspidal automorphic representation for GL(2) occurring as a summand. 
Let us say that two distinct such summands $\tau,\tau'$ exist. Then given (\ref{lfeq2}) we must have 
\begin{align*}
\widetilde{{\rm Sym}^2 (\pi)} = \pi \boxtimes \widetilde{\tau} = \pi \boxtimes \widetilde{\tau'}.
\end{align*}
 Therefore, 
\begin{align*}
-{\rm ord}_{s=1}L^X(s,  \pi \boxtimes \widetilde{\tau} \times \widetilde{\pi} \boxtimes {\tau'}) = 1
\end{align*}
and so 
\begin{align*}
-{\rm ord}_{s=1}L^X(s,  ({\rm Ad}(\pi) \boxplus 1) \times \widetilde{\tau} \boxtimes {\tau'}) = 1,
\end{align*}
using the known automorphy results for ${\rm GL}(2) \times {\rm GL}(2)$ \cite{ra00} and ${\rm GL}(2) \times {\rm GL}(3)$ \cite{KS00}.
By Corollary \ref{ads2}, ${\rm Ad} (\pi)$ is cuspidal, which implies that $\widetilde{\tau} \boxtimes {\tau'}$ must contain the trivial character, and so $\tau = \tau'$.

Furthermore, a given GL(2) summand $\tau$ cannot occur more than once in ${\rm Sym}^3 (\pi)$, since otherwise
\begin{align*}
-{\rm ord}_{s=1}L^X(s, {\rm Sym}^{3}(\pi) \times \widetilde{\tau})\geq 2,
\end{align*}
whereas 
\begin{align*}
-{\rm ord}_{s=1}L^X(s, {\rm Sym}^{2}(\pi) \times \pi \boxtimes \widetilde{\tau})  -{\rm ord}_{s=1}L^X(s,  \omega \times \widetilde{\tau})= 1,
\end{align*}
leading to a contradiction.

Since (under the assumptions of the proposition) ${\rm Sym}^3 (\pi)$ is an automorphic representation for GL(10), it therefore cannot have more than 3 isobaric summands.
\end{proof}

\begin{remark}
The proof of the proposition above says that $r \geq 2$ when $k = 3$. Under the strong Artin conjecture, this bound is sharp due to examples from Sections \ref{sec648} and \ref{ex216} of representations whose symmetric cube decomposes into a 2-dimensional and an 8-dimensional representation.
\end{remark}

We now prove a more detailed version of Theorem \ref{mt2}, namely
\begin{theorem}\label{mt2-p}
Fix some $k \geq 4$. Assume that $\pi \in \mathcal{A}_0({\rm GL}_4(\A_F))$ has ${\rm Sym}^m (\pi)$  isobaric automorphic for all $k-4 \leq m \leq k$, cuspidal for $m = k-3, k-1$, and that the Rankin--Selberg product of $\pi $ or $\widetilde{\pi}$ with any summand of ${\rm Sym}^k (\pi)$ is isobaric automorphic. Then 
 \begin{align*}
 \mathcal{N}({\rm Sym}^{k}(\pi)) \leq  \left\lfloor \frac{(k + 1)(k + 2) (k + 3)/6}{\left\lceil {(k-2) (k-1)k}/{24}\right\rceil} \right\rfloor ,
 \end{align*}
 and if $k \equiv 3,5,\text{or } 7 \bmod 8$, then 
 \begin{align*}
 \mathcal{N}({\rm Sym}^{k}(\pi)) \leq \left\lfloor \frac{(k + 1)(k + 2) (k + 3)/6}{\left\lceil {2(k-2) (k-1)k}/{45}\right\rceil} \right\rfloor .
 \end{align*}
\end{theorem}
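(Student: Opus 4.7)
The plan is to adapt the proof of Theorem \ref{mt-p} to the GL(4) setting, using the identity of Lemma \ref{p-id2} in place of Lemma \ref{prop1}. Fix a cuspidal isobaric summand $\tau$ of ${\rm Sym}^k(\pi)$ for some GL($r$). Specializing identity (\ref{id2}) with $m = k$ and using $\Lambda^3(\pi) = \widetilde{\pi} \otimes \omega$ and $\Lambda^4(\pi) = \omega$, where $\omega$ is the central character of $\pi$, then twisting through by $\widetilde{\tau}$, yields
\begin{align*}
&L^X(s, {\rm Sym}^k(\pi) \times \widetilde{\tau}) \, L^X(s, {\rm Sym}^{k-2}(\pi) \times \Lambda^2(\pi) \boxtimes \widetilde{\tau}) \, L^X(s, {\rm Sym}^{k-4}(\pi) \otimes \omega \times \widetilde{\tau}) \\
&= L^X(s, {\rm Sym}^{k-1}(\pi) \times \pi \boxtimes \widetilde{\tau}) \, L^X(s, {\rm Sym}^{k-3}(\pi) \times \widetilde{\pi} \otimes \omega \boxtimes \widetilde{\tau}).
\end{align*}
The first factor on the left has a pole of order at least one at $s=1$ since $\tau \prec {\rm Sym}^k(\pi)$, while the remaining two left-hand factors are non-vanishing at $s=1$. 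Hence at least one of the two right-hand factors must have a pole at $s=1$, giving two cases.

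In Case A, $-{\rm ord}_{s=1} L^X(s, {\rm Sym}^{k-1}(\pi) \times \pi \boxtimes \widetilde{\tau}) \geq 1$; since ${\rm Sym}^{k-1}(\pi)$ is cuspidal of dimension $k(k+1)(k+2)/6$, this forces ${\rm Sym}^{k-1}(\pi) \prec \widetilde{\pi} \boxtimes \tau$, yielding $r \geq k(k+1)(k+2)/24$. In Case B, $-{\rm ord}_{s=1} L^X(s, {\rm Sym}^{k-3}(\pi) \times \widetilde{\pi} \otimes \omega \boxtimes \widetilde{\tau}) \geq 1$; since ${\rm Sym}^{k-3}(\pi)$ is cuspidal of dimension $(k-2)(k-1)k/6$, this forces ${\rm Sym}^{k-3}(\pi) \prec \pi \otimes \omega^{-1} \boxtimes \tau$, yielding $r \geq (k-2)(k-1)k/24$. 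Since Case B gives the smaller of the two lower bounds, unconditionally $r \geq \lceil (k-2)(k-1)k/24 \rceil$, which produces the first inequality of the theorem.

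For the improved bound under $k \equiv 3, 5, 7 \pmod 8$, I would refine Case B via Lemma \ref{lem}. Writing $\pi \otimes \omega^{-1} \boxtimes \tau = {\rm Sym}^{k-3}(\pi) \boxplus \eta$, with $\eta$ an isobaric automorphic representation of dimension $4r - (k-2)(k-1)k/6$, the lemma applied with $A = {\rm Sym}^{k-3}(\pi)$, $B = \eta$, $C = \tau$, and $D = \pi \otimes \omega^{-1}$ yields
\begin{align*}
r \leq 4 \left( 4r - \frac{(k-2)(k-1)k}{6} \right), \quad \text{hence} \quad r \geq \frac{2(k-2)(k-1)k}{45}.
\end{align*}
The analogous application in Case A produces a strictly stronger lower bound on $r$ (since $k(k+1)(k+2) > (k-2)(k-1)k$), so Case B remains binding.

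The main obstacles I expect are: (i) verifying the hypothesis of Lemma \ref{lem} that $\eta \boxtimes (\widetilde{\pi} \otimes \omega)$ is isobaric automorphic, which should follow from the assumed automorphy of the Rankin--Selberg products of $\pi$ or $\widetilde{\pi}$ with summands of ${\rm Sym}^k(\pi)$, together with an explicit identification of the summands of $\eta$ inside $\pi \otimes \omega^{-1} \boxtimes \tau$; and (ii) pinning down the role of the congruence $k \equiv 3, 5, 7 \pmod 8$, which is most likely present to exclude boundary configurations (e.g., where $\tau$ is forced to be a twist of ${\rm Sym}^{k-3}(\pi)$ itself and $r$ attains the extremal value $(k-2)(k-1)k/6$), handled by a direct contradiction analogous to the separate $k = 4$ analysis in the GL(3) proof.
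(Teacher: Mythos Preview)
Your approach is essentially the paper's: use identity (\ref{id2}) to split into the cases ${\rm Sym}^{k-1}(\pi) \prec \widetilde{\pi} \boxtimes \tau$ (your Case A) and ${\rm Sym}^{k-3}(\pi) \prec (\pi \otimes \widetilde{\omega}) \boxtimes \tau$ (your Case B), take the weaker Case B bound to obtain the first inequality, and then refine Case B via Lemma \ref{lem} to obtain the second. The paper does exactly this, stated tersely.

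Your guess about the congruence $k \equiv 3,5,7 \pmod 8$ is off, however. It is not there to exclude the situation where $\tau$ is a twist of ${\rm Sym}^{k-3}(\pi)$ and $r = (k-2)(k-1)k/6$; that value of $r$ is large and harmless. The point is that these are precisely the residues for which $(k-2)(k-1)k$ is \emph{not} divisible by $24$: for odd $k$ only $k-1$ is even, and $k \not\equiv 1 \pmod 8$ prevents $8 \mid (k-1)$, while even $k$ always gives $8 \mid k(k-2)$. Hence $(k-2)(k-1)k/24 \notin \Z$, so the inequality $4r \geq (k-2)(k-1)k/6$ is automatically strict, the complement $\eta$ has positive dimension, and Lemma \ref{lem} applies. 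This is exactly parallel to the condition $k \equiv 1 \pmod 3$ in Theorem \ref{mt-p}, which ensures $k(k+1)/6 \notin \Z$ so that the $\eta$ there is nontrivial. No separate boundary-contradiction argument of the $k=4$ type is needed; with this correction your obstacle (ii) dissolves.
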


\begin{proof}
This follows in a similar way to the proof of Theorem \ref{mt-p}. We sketch the proof below.

We assume that ${\rm Sym}^k (\pi)$ is not cuspidal, and so it has a cuspidal summand $\tau$ for GL(r). 
Using equation (\ref{id2}), we find that either 
\begin{align*}
L^X(s, {\rm Sym}^{k-1}(\pi) \times  \pi \boxtimes \widetilde{\tau}) 
\end{align*}
or 
\begin{align*}
L^X(s,{\rm Sym}^{k-3}(\pi)  \times \Lambda^3 (\pi) \boxtimes \widetilde{\tau}) 
\end{align*}
has a pole at $s=1$.

In the first case, ${\rm Sym}^{k-1}(\pi) \prec \widetilde{\pi}\boxtimes \tau$, and so
$k (k + 1)(k + 2)/6 \leq 4 r$, implying
\begin{align*}
r \geq \left\lceil \frac{k (k + 1)(k + 2)}{24}\right\rceil.
\end{align*}

In the second case, ${\rm Sym}^{k-3}(\pi) \prec (\pi \otimes \widetilde{\omega}) \boxtimes \tau$, and so we have
\begin{align*}
r \geq \left\lceil \frac{(k-2) (k-1)k}{24}\right\rceil.
\end{align*}
We conclude
 \begin{align*}
 \mathcal{N}({\rm Sym}^{k}(\pi)) \leq  \left\lfloor \frac{(k + 1)(k + 2) (k + 3)/6}{\left\lceil {(k-2) (k-1)k}/{24}\right\rceil} \right\rfloor .
 \end{align*}

We improve this bound for $k \equiv 3,5,\text{or } 7 \bmod 8$. If ${\rm Sym}^{k-3}(\pi) \prec (\pi \otimes \widetilde{\omega}) \boxtimes \tau$, we then can write 
 \begin{align} \label{eqn2}
 {\rm Sym}^{k-3}(\pi) \boxplus \eta = (\pi \otimes \widetilde{\omega}) \boxtimes \tau
 \end{align}
 for an automorphic representation $\eta$ for ${\rm GL}(4r - (k-2) (k-1)k/6)$ over $F$.
 Applying lemma \ref{lem} to equation (\ref{eqn2}) above, we obtain
 \begin{align*}
 r \geq \left\lceil {2(k-2) (k-1)k}/{45}\right\rceil,
 \end{align*}
 and we conclude that 
\begin{align*}
\mathcal{N}({\rm Sym}^{k}(\pi)) \leq \left\lfloor \frac{(k + 1)(k + 2) (k + 3)/6}{\left\lceil {2(k-2) (k-1)k}/{45}\right\rceil} \right\rfloor .
\end{align*}
\end{proof}

One can also obtain further bounds in the cases of $k = 3$ and $4$:
\begin{proposition}\label{p-gl4k3}
Given $\pi \in \mathcal{A}_0({\rm GL}_4(\A_F))$ such that its symmetric square and cube lifts are isobaric automorphic, ${\rm Sym}^2 (\pi)$ is cuspidal, and the Rankin--Selberg product of $\pi $ or $\widetilde{\pi}$ with any summand of ${\rm Sym}^3 (\pi)$ is isobaric automorphic, then 
\begin{align*}
\mathcal{N}({\rm Sym}^{3}(\pi)) \leq 6.
\end{align*}
\end{proposition}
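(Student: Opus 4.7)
The plan is to mimic the argument of Theorem \ref{mt2-p}, but with a variant of the identity in Lemma \ref{p-id2} adapted to the boundary case $m = 3$, where the $\Lambda^4$ term on the left-hand side drops out. Concretely, using the Schur polynomial identities in four variables
\begin{align*}
S_{(1,1)} \cdot S_{(1)} &= S_{(2,1)} + S_{(1,1,1)}, \\
S_{(2)} \cdot S_{(1)} &= S_{(3)} + S_{(2,1)},
\end{align*}
and subtracting, I obtain $S_{(3)} + S_{(1,1)} \cdot S_{(1)} = S_{(2)} \cdot S_{(1)} + S_{(1,1,1)}$. Localizing at each $v \notin X$ as in the proofs of Lemmas \ref{prop1} and \ref{p-id2}, this yields the incomplete $L$-function identity
\begin{equation*}
L^X(s, {\rm Sym}^3(\pi)) \, L^X(s, \pi \times \Lambda^2(\pi)) = L^X(s, \pi \times {\rm Sym}^2(\pi)) \, L^X(s, \Lambda^3(\pi)),
\end{equation*}
and since $\Lambda^3(\pi) = \widetilde\pi \otimes \omega_\pi$ for GL(4), the last factor is simply $L^X(s, \widetilde\pi \otimes \omega_\pi)$.

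Next, I would assume ${\rm Sym}^3(\pi)$ is not cuspidal and pick a cuspidal isobaric summand $\tau$ on GL($r$). Twisting the identity above by $\widetilde\tau$, the first factor on the left acquires a pole of order $\geq 1$ at $s = 1$, while the second factor $L^X(s, \Lambda^2(\pi) \times (\pi \boxtimes \widetilde\tau))$ is well-defined (by the hypothesis that $\pi \boxtimes \widetilde\tau$ is isobaric automorphic) with pole order $b \geq 0$. Writing $a + b = c + d$ for the pole orders and noting $a \geq 1$, $b \geq 0$, we obtain $c + d \geq 1$. Hence either $L^X(s, \widetilde\pi \otimes \omega_\pi \times \widetilde\tau)$ has a pole at $s = 1$, forcing $\tau$ to be a unitary twist of $\pi \otimes \omega_\pi^{-1}$ and so $r = 4$; or $L^X(s, {\rm Sym}^2(\pi) \times (\pi \boxtimes \widetilde\tau))$ has a pole, forcing $\widetilde{{\rm Sym}^2(\pi)}$ (cuspidal of degree 10) to be an isobaric summand of $\pi \boxtimes \widetilde\tau$ (of degree $4r$), which gives $4r \geq 10$, i.e. $r \geq 3$.

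In either case $r \geq 3$, so since ${\rm Sym}^3(\pi)$ has degree 20, the number of cuspidal isobaric summands is at most $\lfloor 20/3 \rfloor = 6$.

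The main subtlety, compared to the straightforward analogue of Proposition \ref{propk3}, is that the factor $L^X(s, \Lambda^2(\pi) \times \pi \boxtimes \widetilde\tau)$ is a triple product whose pole order at $s = 1$ is not controlled by the hypotheses; however, because we use the exact equality of pole orders on the two sides of the twisted identity (rather than a one-sided inequality), a positive contribution from this factor only strengthens the lower bound on $c + d$ and does not obstruct the case analysis. No improvement below $r = 3$ (and hence below $6$) seems accessible with this identity, since when $r = 3$ the relation ${\rm Sym}^2(\pi) \boxplus \rho = \widetilde\pi \boxtimes \tau$ together with Lemma \ref{lem} produces no contradiction.
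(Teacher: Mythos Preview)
Your argument is correct and matches the paper's proof essentially line for line: the same Schur identity $S_{(3)} + S_{(1,1)}S_{(1)} = S_{(2)}S_{(1)} + S_{(1,1,1)}$ in four variables, the same twisted $L$-function identity, and the same dichotomy giving $r \geq 3$ (from the ${\rm Sym}^2$ factor) or $r = 4$ (from the $\Lambda^3$ factor), hence $\mathcal{N} \leq \lfloor 20/3 \rfloor = 6$. One small slip: when the $\Lambda^3(\pi) \times \widetilde\tau$ factor has a pole you get $\tau \simeq \widetilde\pi \otimes \omega_\pi$, not $\pi \otimes \omega_\pi^{-1}$, but this does not affect the conclusion $r = 4$.
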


\begin{proof}
In a similar manner to the proof of Lemma \ref{p-id2}, we obtain 
\begin{align*}
 L^X(s, \pi,{\rm Sym}^3) L^X(s, \pi, {\rm std} \times \Lambda^2) 
= L^X(s, \pi, {\rm Sym}^2 \times  {\rm std}) L^X(s,\pi, \Lambda^3) 
\end{align*}
for $s$ in some suitable right-half plane. Following the proof of Theorem \ref{mt2-p} and using the assumptions of this proposition, we find that either 
\begin{align*}
- {\rm ord}L^X(s, {\rm Sym}^2 (\pi) \times \pi \boxtimes \widetilde{\tau})  &\geq 1 \text{\ \ or }\\
- {\rm ord}L^X(s, \Lambda^3 (\pi) \times \widetilde{\tau})  &\geq 1,
\end{align*}
where $\tau$ is an automorphic representation for GL(r) that is an isobaric summand of ${\rm Sym}^3 (\pi)$. If the former inequality is true, then $r \geq 3$; if the latter inequality holds, then $r \geq 4$.
Therefore, $\mathcal{N}({\rm Sym}^{3}(\pi)) \leq 6.$
\end{proof}

\begin{proposition} \label{p-gl4k4}
Given $\pi \in \mathcal{A}_0({\rm GL}_4(\A_F))$ such that ${\rm Sym}^m (\pi)$ is isobaric automorphic for $m \leq 4$, cuspidal for $m \leq 3$, and that the adjoint lift and the products of $\pi, \widetilde{\pi}$ with any summand of ${\rm Sym}^4 (\pi)$ are isobaric automorphic. 
If $\pi$ is neither essentially self-dual, nor admits a (non-trivial) self-twist, then
\begin{align*}
\mathcal{N}({\rm Sym}^{4}(\pi)) \leq 7.
\end{align*}
\end{proposition}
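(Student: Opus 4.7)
The plan is to adapt the argument of Theorem \ref{mt2-p} to the case $k = 4$, using the extra hypotheses on $\pi$ to exclude small-degree cuspidal summands of ${\rm Sym}^4(\pi)$. Since $\deg {\rm Sym}^4(\pi) = 35$, showing that every cuspidal isobaric summand has degree at least $5$ gives $\mathcal{N}({\rm Sym}^4(\pi)) \leq 7$. Let $\tau$ be a cuspidal summand for ${\rm GL}(r)$; apply Lemma \ref{p-id2} at $m = 4$, twisted by $\widetilde{\tau}$, and use $\Lambda^3(\pi) = \widetilde{\pi} \otimes \omega$, $\Lambda^4(\pi) = \omega$, and $\pi \boxtimes \widetilde{\pi} = 1 \boxplus {\rm Ad}(\pi)$ (isobaric automorphic by hypothesis). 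The factor $L^X(s, \omega \times \widetilde{\tau})$ appears on both sides and cancels, leaving
\begin{equation*}
L^X(s, {\rm Sym}^4\pi \times \widetilde{\tau}) \cdot L^X(s, {\rm Sym}^2\pi \times \Lambda^2\pi \times \widetilde{\tau}) = L^X(s, {\rm Sym}^3\pi \times \pi \times \widetilde{\tau}) \cdot L^X(s, {\rm Ad}(\pi) \otimes \omega \times \widetilde{\tau}).
\end{equation*}

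Since $\tau \prec {\rm Sym}^4(\pi)$, the left-hand side has a pole of order at least $1$ at $s = 1$, so at least one of the two right-hand factors has a pole. If the first right-hand factor has a pole, then $\widetilde{{\rm Sym}^3(\pi)}$ must be an isobaric summand of $\pi \boxtimes \widetilde{\tau}$ (using cuspidality of ${\rm Sym}^3(\pi)$ on ${\rm GL}(20)$ and the hypothesized automorphy of $\pi \boxtimes \widetilde{\tau}$ on ${\rm GL}(4r)$), forcing $r \geq 5$. Otherwise, the second right-hand factor has a pole, so $\sigma := \tau \otimes \widetilde{\omega}$ is a cuspidal isobaric summand of ${\rm Ad}(\pi)$ of degree $r$, and it remains to rule out $r \leq 4$. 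For $r = 1$: cuspidality of $\pi$ forces $L^X(s, \pi \times \widetilde{\pi})$ to have exactly a simple pole at $s = 1$, so $\sigma \neq 1$; and any non-trivial Hecke character summand of ${\rm Ad}(\pi)$ would give the self-twist $\pi \otimes \widetilde{\sigma} \cong \pi$, contradicting the no-non-trivial-self-twist hypothesis.

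For $r \in \{2, 3, 4\}$, the plan is to show that such a small cuspidal summand $\sigma$ of ${\rm Ad}(\pi)$, together with its self-duality $\widetilde{\sigma} \prec {\rm Ad}(\pi)$ and the factorizations $\pi \boxtimes \widetilde{\pi} = 1 \boxplus {\rm Ad}(\pi)$ and $\pi \boxtimes \pi = {\rm Sym}^2(\pi) \boxplus \Lambda^2(\pi)$, forces either a character summand of ${\rm Sym}^2(\pi)$ or $\Lambda^2(\pi)$ (essential self-duality of $\pi$) or a non-trivial character summand of ${\rm Ad}(\pi)$ (a non-trivial self-twist), in each case contradicting a hypothesis. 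The main obstacle is this last step, since the relevant ${\rm GL}(4) \times {\rm GL}(r)$ Rankin--Selberg lifts are not generally known to be isobaric automorphic for $r \in \{2, 3, 4\}$; the implication must be extracted unconditionally by analyzing the pole order at $s = 1$ of triple-product $L$-functions such as $L^X(s, \pi \times \widetilde{\pi} \times \widetilde{\sigma})$ through the above factorizations and the known automorphy of $\Lambda^2(\pi)$ on ${\rm GL}(6)$ due to Kim and Henniart. Once all cases with $r \leq 4$ are eliminated, every cuspidal summand of ${\rm Sym}^4(\pi)$ has degree at least $5$, and the bound $\mathcal{N}({\rm Sym}^4(\pi)) \leq 35/5 = 7$ follows.
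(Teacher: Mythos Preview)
Your setup is correct through the cancellation step and the case split, and your treatment of the first case ($r\geq 5$ when $L^X(s,{\rm Sym}^3\pi\times\pi\boxtimes\widetilde\tau)$ has a pole) matches the paper. The genuine gap is your handling of the second case. You reduce to showing that ${\rm Ad}(\pi)$ has no cuspidal isobaric summand of degree $\le 4$, dispose of degree $1$ correctly, and then for $r\in\{2,3,4\}$ only sketch a plan involving triple-product pole orders that you yourself flag as obstructed by the unavailability of the relevant ${\rm GL}(4)\times{\rm GL}(r)$ Rankin--Selberg lifts. That sketch is not a proof, and it is not clear it can be completed along those lines.

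The paper avoids this case analysis entirely by proving in one stroke that ${\rm Ad}(\pi)$ is \emph{cuspidal}. The point you are missing is that the hypotheses ``not essentially self-dual'' and ``no non-trivial self-twist'' are precisely the input to the Asgari--Raghuram cuspidality criterion for the exterior square transfer on ${\rm GL}(4)$, giving $\Lambda^2(\pi)$ cuspidal on ${\rm GL}(6)$. With ${\rm Sym}^2(\pi)$ and $\Lambda^2(\pi)$ both cuspidal, the identity of Lemma~\ref{prop2} forces the pole order of $L^X(s,{\rm Ad}\times{\rm Ad})\,L^X(s,{\rm Ad})^2\,L^X(s,1)$ at $s=1$ to be exactly $2$, which is impossible if ${\rm Ad}(\pi)$ is not cuspidal. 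Once ${\rm Ad}(\pi)$ is cuspidal on ${\rm GL}(15)$, the second case in your dichotomy immediately gives $\tau={\rm Ad}(\pi)\otimes\omega$, hence $r=15$, and there is nothing further to rule out. (The paper, not having cancelled the $\omega$-factor, also records the possibility $\tau=\omega$ and checks it occurs at most once; either bookkeeping yields $\mathcal N\le 7$.) Replace your incomplete $r\in\{2,3,4\}$ discussion with this cuspidality argument for ${\rm Ad}(\pi)$.
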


\begin{proof}
From \cite{Ki03} and \cite{AR11}, we know that the exterior square lift of $\pi$ is automorphic cuspidal. Then by Lemma \ref{prop2}, we have that ${\rm Ad}(\pi)$ is cuspidal.
Following the proof of Theorem \ref{mt2-p}, we note that either
\begin{align*}
&L^X(s, {\rm Sym}^3(\pi) \times  \pi \boxtimes \widetilde{\tau}) 
\text{\ \ or }\\
&L^X(s, ({\rm Ad} (\pi)\omega  \boxplus \omega ) \times \widetilde{\tau}) 
\end{align*}
has a pole at $s=1$. If the first $L$-function has a pole at $s=1$, we continue to proceed as in the proof of Theorem \ref{mt2-p}.
If the second $L$-function has a pole at $s=1$, then either $\tau = {\rm Ad} (\pi)$ or $\omega$. Given Lemma \ref{p-id2}, $\omega$ can only occur as a summand of ${\rm Sym}^4 (\pi)$ at most once.
We conclude that $\mathcal{N}({\rm Sym}^{4}(\pi)) \leq 7.$

\end{proof}

\begin{remark} \label{expec}
For Theorem \ref{mt-p}, as $k \rightarrow \infty$ the right-hand side of the first displayed inequality tends to 3 and the right-hand side of the second displayed inequality tends to 2. In the case of Theorem \ref{mt2-p}, as $k \rightarrow \infty$, the right-hand side of the first displayed inequality tends to 4 and the right-hand side of the second displayed inequality tends to 3.
Given the results of \cite{ra09}, it may be reasonable to expect that for sufficiently large $k$, under the conditions of Theorems \ref{mt-p} and \ref{mt2-p}, the upper bound on $\mathcal{N}({\rm Sym}^k \pi)$ should be 1.
\end{remark}

\section{Examples (via the strong Artin conjecture)} \label{aexs}

To provide some context for our bounds in Theorems \ref{mt} and \ref{mt2} (most of which are not expected to be sharp), we consider decompositions of symmetric power lifts of representations for certain finite groups. Under the strong Artin conjecture,
these would correspond to decompositions of symmetric power lifts of various automorphic representations for GL(3) and GL(4). Sections \ref{ex1080}--\ref{ex432} concern 3-dimensional irreducible representations and sections \ref{exsl29}--\ref{ex1440} consider 4-dimensional irreducible representations.

In this section, we will repeatedly make use of results from \cite{bl17,Ma04,MW17}.
Calculations were carried out using GAP \cite{GG}, and some groups below are referred to using GAP notation, where the group $[x,y]$ corresponds to \textit{SmallGroup(x,y)} in GAP. We are also grateful to K. Martin for identifying a number of finite groups of interest (using Magma \cite{mag}), which include those from sections \ref{ex640} and \ref{ex1440}.

The first two subsections contain some detail of the representations and their decompositions, whereas for the remaining subsections the details are not included but follow in a similar way.

\subsection{3-dimensional representations}

\subsubsection{The Valentiner group $V_{1080}$} \label{ex1080}
This group is the unique nonsplit central extension of $A_5$ by $C_3$ and is known as \textit{SmallGroup(1080,260)} in GAP. It has 17 irreducible representations, four of which are three-dimensional and faithful. We list some characters of interest to us in the table below, where we have preserved the conjugacy class names and character numbering as used in GAP. We have

\begin{center}
\scalebox{0.8}{
\begin{tabular}{p{0.4cm}|c|c|c|c|c|c|c|c|c|c|c|c|c|c|c|c|c}
& 1a& 3a& 3b& 2a&  6a&  6b& 3c& 3d& 12a& 12b& 4a& 5a& 15a& 15b& 15c& 5b& 15d\\
\hline
$\chi_2$      &3&  $-3\alpha$& $-3\overline{\alpha}$& $-1$&   $\alpha$&  $\overline{\alpha}$&  . & . & $-\alpha$& $-\overline{\alpha}$&  1&  $\beta$&   $\gamma$&  $\overline{\gamma}$&  $\overline{\delta}$& $\beta^*$&   $\delta$\\
$\chi_3$      &3&  $-3\alpha$& $-3\overline{\alpha}$& $-1$&   $\alpha$&  $\overline{\alpha}$&  . & . & $-\alpha$& $-\overline{\alpha}$&  1& $\beta^*$&   $\delta$&  $\overline{\delta}$&  $\overline{\gamma}$&  $\beta$&   $\gamma$\\
$\chi_4 $     &3& $-3\overline{\alpha}$&  $-3\alpha$& $-1$&  $\overline{\alpha}$&   $\alpha$&  . & . &$-\overline{\alpha}$&  $-\alpha$&  1&  $\beta$&  $\overline{\gamma}$&   $\gamma$&   $\delta$& $\beta^*$&  $\overline{\delta}$\\
$\chi_5  $    &3& $-3\overline{\alpha}$&  $-3\alpha$& $-1$&  $\overline{\alpha}$&   $\alpha$&  . & . &$-\overline{\alpha}$&  $-\alpha$&  1& $\beta^*$&  $\overline{\delta}$&   $\delta$&   $\gamma$&  $\beta$&  $\overline{\gamma}$\\
$\chi_8   $   &6&  $-6 \alpha$& $-6 \overline{\alpha}$&  2&   $-2\alpha$&  $-2\overline{\alpha}$&  . & . &  . &  . & . & 1&  $-\alpha$& $-\overline{\alpha}$& $-\overline{\alpha}$&  1&  $-\alpha$\\
$\chi_9    $  &6& $-6 \overline{\alpha}$&  $-6 \alpha$&  2&  $-2\overline{\alpha}$&   $-2\alpha$&  . & . &  . &  . & . & 1& $-\overline{\alpha}$&  $-\alpha$&  $-\alpha$&  1& $-\overline{\alpha}$\\
$\chi_{12}$     & 9 & 9 & 9 & 1 &  1 &  1 & . & . &  1 &  1 & 1 &$-1$ &  $-1$ &  $-1$ &  $-1$ & $-1$ &  $-1$ \\
$\chi_{13} $    &9  &$-9\alpha$ &$-9\overline{\alpha}$  & 1 & $-\alpha$& $-\overline{\alpha}$ & .&  .&  $-\alpha$& $-\overline{\alpha}$ & 1 &$-1$ &   $\alpha$ & $\overline{\alpha}$  &$\overline{\alpha}$ &$-1$ &   $\alpha$ \\
$\chi _{14}$    &9 &$-9\overline{\alpha}$  &$-9\alpha$ & 1 &$-\overline{\alpha}$ & $-\alpha$&  . & .& $-\overline{\alpha}$  &$-\alpha$&  1 &$-1$   &$\overline{\alpha}$  & $\alpha$ &   $\alpha$ & $-1$  &$\overline{\alpha}$\\
$\chi_{15}$   &10& 10& 10& $-2$&  $-2$&  $-2$&  1&  1&   . &  . & . & . &  . &  . &  . & . &  .\\
$\chi_{16}$   &15&  $-15 \alpha$& $-15 \overline{\alpha}$& $-1$&   $\alpha$&  $\overline{\alpha}$&  . & . &  $\alpha$&  $\overline{\alpha}$& $-1$&  . &  . &  . &  . & . &  .\\
$\chi_{17}$   &15& $-15 \overline{\alpha}$&  $-15 \alpha$& $-1$&  $\overline{\alpha}$&   $\alpha$&  . & . & $\overline{\alpha}$&   $\alpha$& $-1$&  . &  . &  . &  . & . &  .
\end{tabular}
}
\end{center}
where the dots indicate zeros and
\begin{align*}
\alpha &= \frac{1 + \sqrt{-3}}{2},\\
\beta &= (1-\sqrt{5})/2, \quad \beta^* = (1+\sqrt{5})/2, \\
\gamma &=  - e^{2\pi i 7/15}-e^{2\pi i 13/15}= - \alpha \beta,\\
\delta &= -e^{2\pi i/15}-e^{2\pi i 4/15}= -\alpha \beta^* .
\end{align*}

The four irreducible representations of dimension 3 all have corresponding symmetric square and symmetric cube representations that are irreducible. In particular,
${\rm Sym}^2 (\chi_2) = {\rm Sym}^2 (\chi_3) =\chi_9 \text{ and }{\rm Sym}^2 (\chi_4) = {\rm Sym}^2 (\chi_5) = \chi_8$.
 Furthermore, $ {\rm Sym}^3 (\chi_j) = \chi_{15},$ for $j = 2,3,4,$ and $5$.

The symmetric fourth power lifts are not irreducible. They decompose into the sum of the symmetric square representation with an irreducible nine-dimensional representation,
\begin{align*}
{\rm Sym}^4 (\chi_2) = {\rm Sym}^4 (\chi_3) = \chi_8 + \chi_{13} \\
{\rm Sym}^4 (\chi_4) = {\rm Sym}^4 (\chi_5) = \chi_9 + \chi_{14} ,
\end{align*}
and so $$\mathcal{N}\left({\rm Sym}^4 (\chi_j)\right) = 2,$$ for $j = 2,3,4,$ and $5$.

\subsubsection{The groups [648,531], [648,532], and [648,533]}\label{sec648}

These groups are three of the four nonsplit central extensions of $G_{216}$ by $C_3$, where $G_{216}$ is the group $[216,153]$ in GAP and an explicit matrix presentation is provided in \cite{Ma04}. The fourth nonsplit central extension is not included as it does not have any 3-dimensional faithful irreducible representations.

For the first group, we list a few relevant lines of the character table, including the six 3-dimensional faithful irreducible representations of the group. We again preserve the conjugacy class notation and character numbering from GAP. Due to the width of the character table, we split it in two.

\begin{center}
\scalebox{0.8}{
\begin{tabular} {r|c|c|c|c|c|c|c|c|c|c|c|c|}
&1a & 3a & 3b & 3c & 2a &  6a &  6b & 4a & 12a & 12b &  9a & 9b \\ \hline 
$\chi_{4}$ & 2 &  2 &  2 &  2 & $-2$ &  $-2$ &  $-2$ &  . &  . &  . & $-1$ &  $-1$  \\ 
$\chi_{8}$ &  3 &  $-3\alpha$ &$-3\overline{\alpha}$  & . &$-1$ &   $\alpha$ & $\overline{\alpha}$  & 1 &  $-\alpha$ &$-\overline{\alpha}$  &  $\gamma$ &  $\delta$   \\ 
$\chi_{9}$ &  3 &  $-3\alpha$ &$-3\overline{\alpha}$  & . &$-1$ &   $\alpha$ & $\overline{\alpha}$  & 1 &  $-\alpha$ &$-\overline{\alpha}$  &  $\delta$ &  $\varepsilon$  \\ 
$\chi_{10}$ & 3 &  $-3\alpha$ &$-3\overline{\alpha}$  & . &$-1$ &   $\alpha$ & $\overline{\alpha}$  & 1 &  $-\alpha$ &$-\overline{\alpha}$  &  $\varepsilon$ &  $\gamma$  \\
$\chi_{11}$ & 3 & $-3\overline{\alpha}$  & $-3\alpha$ & . &$-1$ &  $\overline{\alpha}$  &  $\alpha$ & 1 & $-\overline{\alpha}$  & $-\alpha$ & $\overline{\delta }$ & $\overline{\varepsilon}$  \\
$\chi_{12}$ & 3 & $-3\overline{\alpha}$  & $-3\alpha$ & . &$-1$ &  $\overline{\alpha}$  &  $\alpha$ & 1 & $-\overline{\alpha}$  & $-\alpha$ & $\overline{\gamma }$ & $\overline{\delta }$  \\
$\chi_{13}$ & 3 & $-3\overline{\alpha}$  & $-3\alpha$ & . &$-1$ &  $\overline{\alpha}$  &  $\alpha$ & 1 & $-\overline{\alpha}$  & $-\alpha$ & $\overline{\varepsilon}$ & $\overline{\gamma }$  \\
$\chi_{19}$ & 6 & $-6\overline{\alpha}$  & $-6\alpha$ & . & 2 &  $-2\overline{\alpha}$  &  $-2\alpha$ & . &  . &  . &$-\overline{\gamma }$ &$-\overline{\delta }$  \\
$\chi_{22}$ & 8 &  8 &  8 & $-1$ &  . &  . &  . & . &  . &  . & $-2\overline{\alpha}$  & $-2\overline{\alpha}$    \\
\end{tabular}}
\end{center}
\begin{center}
\scalebox{0.8}{
\begin{tabular} {r|c|c|c|c|c|c|c|c|c|c|c|c}
& 9c&9d&18a & 18b & 18c & 9e & 9f & 9g& 9h &18d& 18e& 18f\\ \hline
$\chi_{4}$ & $-1$ &  $-1$& 1 &   1 &   1 &  $-1$ &  $-1$ &  $-1$& $-1$ & 1& 1 & 1\\
$\chi_{8}$ &  $\varepsilon$ &  .&  $-\beta^7$ &  $-\beta^4$ &  $-\beta$ & $\overline{\delta }$ & $\overline{\gamma }$ & $\overline{\varepsilon}$& .  &$-\beta^5$& $-\beta^2$& $-\beta^8$\\
$\chi_{9}$ &  $\gamma$ &  .&  $-\beta^4$ &  $-\beta$ &  $-\beta^7$ & $\overline{\varepsilon}$ & $\overline{\delta }$ & $\overline{\gamma }$&  .  &$-\beta^8$& $-\beta^5$&  $-\beta^2$\\
$\chi_{10}$ &  $\delta$ &  .&  $-\beta$ &  $-\beta^7$ &  $-\beta^4$ & $\overline{\gamma }$ & $\overline{\varepsilon}$ & $\overline{\delta }$& .  & $-\beta^2$& $-\beta^8$& $-\beta^5$\\
$\chi_{11}$ & $\overline{\gamma }$ &  .& $-\beta^5$ & $-\beta^8$ & $-\beta^2$ &  $\varepsilon$ &  $\delta$ &  $\gamma$ & .  & $-\beta$&  $-\beta^4$&  $-\beta^7$\\
$\chi_{12}$ & $\overline{\varepsilon}$ &  .& $-\beta^2$ & $-\beta^5$ & $-\beta^8$ &  $\delta$ &  $\gamma$ &  $\varepsilon$ & .   & $-\beta^4$&  $-\beta^7$&  $-\beta$\\
$\chi_{13}$ & $\overline{\delta }$ &  .& $-\beta^8$ & $-\beta^2$ & $-\beta^5$ &  $\gamma$ &  $\varepsilon$ &  $\delta$ & .  & $-\beta^7$  &$-\beta$  &$-\beta^4$\\
$\chi_{19}$ &$-\overline{\varepsilon}$ &  .& $-\beta^2$ & $-\beta^5$ & $-\beta^8$ & $-\delta$ & $-\gamma$ & $-\varepsilon$ &  .  & $-\beta^4$&  $-\beta^7$& $-\beta$\\
$\chi_{22}$ & $-2\overline{\alpha}$  & $\overline{\alpha}$&  . &  . &  . &  $-2\alpha$ &  $-2\alpha$ &  $-2\alpha$& $\alpha$&  . &  . &  .
\end{tabular}}
\end{center}

where, using the notation $e (\cdot) = e ^{2\pi i (\cdot)}$,
\begin{align*}
\alpha &= \frac{1 + \sqrt{-3}}{2},\\
\beta &= e(1/9),\\
\gamma &= 2e(4/9)+e(7/9), \\
\delta &= -e(4/9)-2e(7/9), \\
\varepsilon &=-e(4/9)+e(7/9).
\end{align*}

We first consider the symmetric power lifts of the character $\chi_8$, obtaining 
\begin{align*}
{\rm Sym}^2 (\chi_8) &= \chi_{19}\\
{\rm Sym}^3 (\chi_8) &= \chi_4 + \chi_{22}.
\end{align*}
Similarly, the characters $\chi_9,  \dots, \chi_{13}$ are all twists or twisted duals of the character $\chi_8$ and therefore exhibit the same decomposition pattern -- their symmetric square lifts are irreducible and their symmetric cube lifts decompose into two irreducible characters of dimension 2 and 8. Therefore, 
$$\mathcal{N}\left({\rm Sym}^3 (\chi_j)\right) = 2,$$
for $j = 8, \dots, 13$.

The other two groups [648,532] and [648,533] also have six faithful 3-dimensional irreducible representations whose symmetric squares are irreducible and whose symmetric cubes decompose into an irreducible 2-dimensional representation and an irreducible 8-dimensional representation.

\subsubsection{${\rm PSL}_2(\F_7)$}
There are exactly two 3-dimensional faithful irreducible representations of this group. Their corresponding symmetric square representations are irreducible (and equal to each other). The symmetric cube representation in both cases is not irreducible but rather the direct sum of a 3-dimensional representation and a 7-dimensional representation. So we have $$\mathcal{N}({\rm Sym}^3 (\rho)) = 2,$$
where $\rho$ denotes either 3-dimensional irreducible representation.

\subsubsection{The groups [216,88] and [432,239]} \label{ex216} \label{ex432}
Both groups have eight 3-dimensional faithful irreducible representations. Each representation is twist-equivalent to three others, and is a twisted dual of the remaining four. They all have irreducible symmetric square lifts, and their symmetric cubes each decompose into an 8-dimensional and a 2-dimensional irreducible representation, so
\begin{align*}
\mathcal{N}({\rm Sym}^3 (\rho))= 2,
\end{align*}
with $\rho$ denoting any of the 3-dimensional faithful irreducible representations.

\subsubsection{$S_4$ and $A_4$}
The group $S_4$  has two faithful 3-dimensional irreducible representations $\rho_1,\rho_2$, which are twist-equivalent by a quadratic character. Their symmetric square decomposes into a direct sum of 1-dimensional, 2-dimensional, and 3-dimensional irreducible representations, so 
\begin{align*}
\mathcal{N}({\rm Sym}^2 (\rho_j))= 3.
\end{align*}

\label{a4ex}
The group $A_4$ has one faithful 3-dimensional irreducible representation $\rho$.
Its symmetric square decomposes into three 1-dimensional representations and one 3-dimensional representation, so
\begin{align*}
\mathcal{N}({\rm Sym}^2 (\rho))= 4.\\
\end{align*}

\subsection{4-dimensional representations}
\subsubsection{${\rm SL}(2,\F_9)$} \label{exsl29}
This group (referred to as \textit{SmallGroup(720,409)} in GAP) has two faithful irreducible representations that are 4-dimensional.

These representations have the same symmetric square lift, namely the unique self-dual 10-dimensional irreducible representation of ${\rm SL}(2,\F_9)$. Their symmetric cubes decompose into two 10-dimensional irreducible representations, so
\begin{align*}
\mathcal{N}({\rm Sym}^3 (\rho))= 2,
\end{align*}
where $\rho$ denotes either 4-dimensional irreducible representation.

\subsubsection{The groups [640,21454] and [640,21455]}\label{ex640}
The first group
 has four faithful 4-dimensional irreducible representations. Each has an irreducible symmetric square lift and a symmetric cube lift that decomposes into irreducible representations of dimension 4 and 16. The same statements hold true for the group [640,21455]. So
$$\mathcal{N}({\rm Sym}^3 (\rho))= 2,$$
where $\rho$ is any irreducible 4-dimensional representation of [640,21454] or [640,21455].

\subsubsection{The group [1440,4591]} \label{ex1440}
This group has four faithful 4-dimensional irreducible representations. Each one has irreducible symmetric square and symmetric cube lifts. Their symmetric fourth power lifts all decompose into two 5-dimensional, one 9-dimensional, and one 16-dimensional irreducible representation. Therefore 
\begin{align*}
\mathcal{N}({\rm Sym}^4 (\rho))= 4,
\end{align*}
where $\rho$ denotes any of the 4-dimensional irreducible representations.

\end{document}